\newtheorem{thm}{Theorem}
\newtheorem{lem}{Lemma}
\newtheorem{defa}{Definition}
\newtheorem{prop}{Proposition}
\begin{document}

\title{Lagrangian submanifolds of standard multisymplectic manifolds}
\author{Gabriel Sevestre\thanks{gabriel.sevestre@univ-lorraine.fr}~  
 and Tilmann Wurzbacher\thanks{tilmann.wurzbacher@univ-lorraine.fr}\\
\phantom{Higgs} \\
Institut \'Elie Cartan Lorraine \\
Universit\'{e} de Lorraine et C.N.R.S. \\
F-57000 Metz, France \\}

\date{September 2, 2018}

\maketitle

\begin{abstract}
We give a detailed, self-contained proof of Geoffrey Martin's normal form theorem for Lagrangian 
submanifolds of standard multisymplectic manifolds (that generalises Alan Weinstein's famous normal
form theorem in symplectic geometry), providing also complete proofs for the necessary results in
foliated differential topo\-logy, i.e., a foliated tubular neighborhood theorem and a foliated relative Poincar\'e lemma.
\end{abstract}

\noindent {\bf MSC (2010)} Primary: 53D05, 53D12; Secondary: 53C12\\

\noindent {\bf Keywords:} multisymplectic geometry, Lagrangian submanifolds, foliated differential topology


\section*{Introduction}
\addcontentsline{toc}{section}{Introduction}

It is well-known that Lagrangian submanifolds play a central role in symplectic geometry. This can easily be traced back to the search 
for so-called "generating functions" of (local) symplectomorphisms in the framework of the Hamilton-Jacobi method for integrating 
Hamilton's equation (see the classical reference \cite{arno}, Sections 47-48). 
This method is closely connected to the observations 
that the graph of a diffeomorphism between two symplectic manifolds 
is Lagrangian if and only if the diffeomorphism is symplectic and that the image of a one-form is Lagrangian (inside the cotangent bundle) if and
only if the form is closed. Alan Weinstein deduced from such classical facts his famous symplectic creed: "Everything is a Lagrangian submanifold".
Of course, from a modern perspective the main argument for this creed is ... Weinstein's fundamental result from 1971 (see \cite{weinlag}):\\

\noindent {\bf Weinstein's normal form theorem.} {\it Let $L$ be a Lagrangian submanifold of a symplectic manifold $(M,\omega)$. 
Then there exist open neighborhoods 
$U$ and $V$ of $L$ in $M$ respectively $T^*L$, and a diffeomorphism $\phi: U\to V$ such that $\phi\vert_L=id_L$ and
$\phi^*(\omega^{T^*L})=\omega$ on $U$.}\\
  
Classical mechanics is geometrized by the Hamiltonian approach on cotangent bundles and more generally on symplectic manifolds, 
whereas its 
higher dimensional analogue, classical field theory, can be formulated in a Hamiltonian way on multicotangent or jet bundles, and leads more
generally to multisymplectic manifolds (cf. \cite{leotil}, Section 2 for a recent account of this). A multisymplectic manifold is a manifold together with
a nondegenerate, closed $(k{+}1)$-form $\omega$ with $k$ in $\mathbb N$; $k=1$ being the symplectic case.\\

In a 1988  article (\cite{Martin1988}) Geoffrey Martin extended Weinstein's result to an important class of multisymplectic manifolds 
including multicotangent bundles. (Note that he reserves the term ``multisymplectic" for the class of multisymplectic manifolds 
where his theorem 
applies.) The proof of his main result (Lemma 2.1) being rather cryptic, and in parts being reduced to mere hints for the reader, 
his precocious results fell into oblivion, not receiving the deserved attention.\\

The spanish school on differential-geometric methods in mathematical physics revived multisymplectic geometry 
(in its modern definition) at the end of
the last century, and Manuel de L\'eon, David Mart\'\i n de Diego and Aitor Santamar\'\i a-Merino gave in \cite{leodieg} a 
rather detailed framework for 
Martin's normal form theorem. Unfortunately, the necessary condition that a certain naturally associated subbundle of the 
tangent bundle of the ambient
manifold should be integrable is not emphasised in their proof of Martin's main result (see the proof of Lemma 3.24 
in the cited article).\\

Since multisymplectic geometry is by now emerging fast as the "right"  (higher) geometric formulation of classical field theory,
thanks to the advent of rather well-suited homotopical and homological methods, the interest in Martin's result is growing and 
we felt compelled to give a 
self-contained, detailed account of his result and techniques. It turns out that one crucially needs 
"folkloristic" extensions of two 
standard theorems in differential topology to a foliated setting (these being of independent interest, in fact). Once established, 
Martin's ingenious idea that 
the path method of J\"urgen Moser (see \cite{mosvol}) applies though a multisymplectic form of degree $k{+}1$ does not yield an isomorphism between
the tangent bundle and the bundle of $k$-forms, goes through and yields the following result:\\

\noindent {\bf Martin's normal form theorem (Theorem 1 below).} {\it Let $(M,\omega)$ be a standard $k$-plectic manifold, with $k>1$. Let the
distribution $W\subset TM$ be defined as  $W:=\cup_{p\in M} W_{\omega}(p)$ and let $L$ be a $k$-Lagrangian 
submanifold of $M$ complementary to $W$ (that is $T_pM=T_pL\oplus W|_p$, $\forall p\in L$).
If $W$ is integrable, there exist open neighborhoods $U$ and $V$ of $L$ in $M$ 
and $\Lambda^k(T^{*}L)$, and a diffeomorphism $\phi:U\rightarrow V$ such that:
\begin{equation*}
\phi|_{L}=id_{L}   \,\, \mbox{and} \,\,  \phi^{*}(\omega^{\Lambda^k(T^{*}L)})=\omega \,\,  \mbox{on} \,\, U.
\end{equation*}  }

Results of a related but more global nature were obtained by Frans Cantrijn, Alberto Ibort and Manuel de L\'eon in 1999
(see Theorem 7.3 in \cite{cil1999}) and Michael Forger and Sandra Z. Yepes in 2013 (see Theorem 7 in  \cite{Forger:2012pr}). In both 
cases the focus is shifted from the local situation near a Lagrangian submanifold to the foliation associated to an involutive Lagrangian 
distribution and its leaf space, implying an important role for regularity assumptions on the foliation, and for connections on the leaves.\\

We conclude the introduction by summarising the paper's content. In Section 1 we give the basic definitions, as multisymplectic vector 
spaces and manifolds and their isotropic and Lagrangian subspaces respectively submanifolds. We also give here some examples of 
isotropic and Lagrangian submanifolds of multisymplectic manifolds. Section 2 introduces the notions of "standard" multisymplectic 
vector spaces and manifolds, central for this article. We prove the fundamental properties of a standard multisymplectic vector space
$(V,\omega)$ (with $\omega$ a $(k{+}1)$-linear form and $k>1$), notably the existence of a unique subspace $W\subset V$ that is 
isomorphic to  $\Lambda^k (V/W)^*$ via the natural contraction map (compare Lemma 1 and Proposition 1). On the level of manifolds, we 
explain why multicotangent bundles are standard multisymplectic manifolds. In Section 3 we give a detailed proof of Martin's normal form theorem 
(see above), expanding and explaining Martin's extremely brief original proof. In an Appendix we give complete proofs for the extension of 
two classical differential-topological results to foliated manifolds, more precisely, we show a foliated tubular neighborhood theorem and a foliated relative 
Poincar\'e lemma (see Theorems 2 and 3).\\

\noindent {\it Acknowledgements.}  We wish to thank Camille Laurent-Gengoux for several useful discussions related to the content of this article.


\section{Multisymplectic vector spaces and manifolds,  and Lagrangian submanifolds}

In this section, we give the basic definitions used in the paper, together with some examples. We will work over the real numbers and 
all manifolds will be smooth. 
The algebraic considerations for vector spaces hold true over fields of characteristic zero instead of the reals.

\begin{defa}
Let $V$ be a vector space, $k\geq 1$ and $\omega\in\Lambda^{k+1}(V^*)$. We say that
$(V,\omega)$ is a k-plectic vector space 
(or simply a multisymplectic vector space) if $\omega$ is nondegenerate, in the sense that :
\begin{equation*}
\omega^{\sharp}:V\rightarrow \Lambda^k(V^*); \ v\mapsto \iota_v\omega
\end{equation*}
is injective. 
\end{defa}

As in the symplectic case, we can define orthogonal subspaces with respect to $\omega$, but in this setting
we have more than just one "$\omega$-orthogonal complement" for a given subspace of $V$:

\begin{defa}

Let $(V,\omega)$ be a $k$-plectic vector space, $U\subset V$ a subspace and $1\leq j\leq k$.
We define the $j$-th orthogonal complement of $U$ with respect to $\omega$ as follows:

\begin{equation*}
U^{\perp,j}:=\{v\in V \ | \ \iota_{v\wedge u_1\wedge...\wedge u_j}\omega=0, \ \forall u_1,...,u_j\in U \}.
\end{equation*}
We say that $U\subset V$ is a $j$-isotropic subspace (respectively, a $j$-Lagrangian subspace) if 
$U\subset U^{\perp,j}$ (respectively if $U=U^{\perp,j}$). 

\end{defa}

Going to manifolds we have:

\begin{defa}
Let $M$ be a manifold and $\omega\in \Lambda^{k+1}T^*M$. We say that $(M,\omega)$ is a $k$-plectic manifold, 
or simply a multisymplectic manifold, if the form $\omega$ is closed and nondegenerate, in the sense that for all $q\in M$, the map :

\begin{equation*}
\omega_q^{\sharp}:T_qM\rightarrow \Lambda^k(T_q^*M); \ v_q\mapsto \iota_{v_q}\omega_q
\end{equation*}
is injective. 

\end{defa}

Analogously to the linear case, we will say that a regular submanifold $L$ is a $j$-isotropic respectively $j$-Lagrangian 
submanifold of $M$, if, for each $p\in L$, $T_pL$ is a $j$-isotropic respectively $j$-Lagrangian subspace 
of $T_pM$. \newline

Before studying a special class of multisymplectic manifolds in Sections 2 and 3, we will give general examples of 
multisymplectic manifolds and isotropic submanifolds. Note that if $N$ is a submanifold of $M$ of dimension $n$, then 
$N$ is $j$-isotropic for all $j\geq n$ in a trivial way. 
Thus in the following examples, we will only consider "interesting" isotropic and Lagrangian submanifolds, where this is not the case.\newline

\noindent \textbf{Example 1}. Let $M$ be an orientable manifold of dimension $m$ and $\omega$ a volume form on $M$. Then $(M,\omega)$ 
is a $(m{-}1)$-plectic manifold. In this case there are no non-trivial examples (in the sense stated above) of isotropic submanifolds of $M$. 
 \newline

\noindent \textbf{Example 2}. Let $Q$ be a manifold, $k \geq 1$ and the dimension of $Q$ being greater or equal to $k{+}1$. 
Then the manifold $M:=\Lambda^k(T^*Q)$ is naturally equipped with a $k$-plectic form. 
Indeed let $\theta\in \Omega^k(M)$ be defined by :
\begin{equation*}
\theta_{\alpha_p}(v_1,...,v_k):=\alpha_p(\pi_{*\alpha_p}(v_1),...,\pi_{*\alpha_p}(v_k)),
\end{equation*}
where $\alpha_p\in M$, $v_j\in T_{\alpha_p}(M)$, and $\pi:M\rightarrow Q$ is the canonical projection. Then $\omega:=-d\theta$ 
is a $k$-plectic form on $M$. This construction is the generalization of the symplectic form on a cotangent bundle. The 
zero-section of $\Lambda^k(T^*M)$ is a $k$-Lagrangian manifold, and the fibers of $\pi$ are $1$-Lagrangian. To see this, we can work 
in local coordinates. A direct computation shows then that if $(q^i)$ are coordinates on an open subset $U\subset Q$ and 
$(p_I)$ are coordinates on the fibers of $\Lambda^k(T^*U)$, we have :
\begin{equation*}
\omega|_U=- \sum_{i_1,...,i_k} dp_{i_1,...,i_k}\wedge dq^{i_1}\wedge...\wedge dq^{i_k}.
\end{equation*}
Using  this local description it is easy to see that $Q$ is $k$-Lagrangian and the fibers are $1$-Lagrangian. More generally, 
for $\alpha\in \Omega^k(Q)$, we have that $im(\alpha)\subset M$ is a $k$-Lagrangian manifold if and only if $\alpha$ is closed. 
This follows from $\alpha^*\theta=\alpha$ (where, on the left-side, $\alpha$ is regarded as a map $\alpha:Q\rightarrow M$), 
implying $\alpha^*\omega=- d\alpha$. \newline

\noindent \textbf{Example 3}. Let $(M,\eta)$ be a $k$-plectic manifold and $\omega\in\Omega^{k{+}1}(M\times M)$ the form given by:

\begin{equation*}
\omega=p_1^*\eta - p_2^* \eta,
\end{equation*}

\noindent where for $i=1,2$, the map $p_i$ is the projection $p_i: M\times M\rightarrow M$ on the $i$-th factor. Then $(M\times M,\omega)$ is a 
$k$-plectic manifold. Considering a diffeomorphism $\phi:M\rightarrow M$, we claim that $\Gamma_{\phi}$, the graph of $\phi$,
 is $k$-Lagrangian if and only if $\phi$ is a symplectomorphism in the sense that $\phi^*\eta=\eta$. Indeed $T_{(q,\phi(q))}(M\times M)=\{(u_q,\phi_{*q}(u_q)) \ | \ u_q\in T_q M\}$. Then for $(u_i,\phi_*(u_i))\in T_{(q,\phi(q))}(M\times M)$ ($1\leq i \leq k$) we obtain:
\begin{align*}
\omega_{(q,\phi(q))}((u_1,\phi_*(u_1)),...,(u_k,\phi_*(u_k))&=\eta_q(u_1,...,u_k) - \eta_{\phi(q)}(\phi_*(u_1),...,\phi_*(u_k)) \\
&=\eta_q(u_1,...,u_k)-(\phi^*\eta)_q(u_1,...,u_k),
\end{align*}
showing the claim. \newline

\noindent \textbf{Example 4}. Let $M$ be a complex manifold with a holomorphic volume form 
$\Omega$. Then setting $\omega=\Re(\Omega)$, the real part of $\Omega$, 
turns $(M,\omega)$ into a multisymplectic manifold. To get a feeling of how Lagrangian submanifolds may look in this case,
we consider $M=\mathbb{C}^3=\mathbb{R}^6$ and $\Omega=dz^1\wedge dz^2 \wedge dz^3 = dz^{123}$. We find:
\begin{equation*}
\omega=dx^{123}-dx^{156}-dx^{246}-dx^{345},
\end{equation*}
\newline
where we have omitted wedge products and $x^i$ are coordinates in $\mathbb{R}^6$. Then the manifold $\{x^1=x^2=x^3=0\}$ 
is 2-Lagrangian, and the manifold $\{x^2=x^3=x^5=x^6=0\}$ is 1-Lagrangian. \newline

\noindent \textbf{Example 5}. (Compare \cite{cil1999}, Section 3.) 
Let $M=\mathbb{R}^6$ and 
\begin{equation*}
\omega=dx^{145}+dx^{246}+dx^{356}+dx^{456}.
\end{equation*}
\newline
Then $\omega$ is a $2$-plectic form, and $L_2=\{ x^1=x^3=x^4=x^6=0 \}$ and 
$L_3=\{ x^4=x^5=x^6= 0\}$ are (linear)
$1$-Lagrangian submanifolds of $(M,\omega)$ of different dimensions. (Note that $(\mathbb{R}^6, \omega)$
is symplectomorphic to the multicotangent bundle $\Lambda^2(T^*\mathbb{R}^3)$ with the multisymplectic 
form defined in Example 2 above.)
\newline

\noindent \textbf{Example 6}. Let $G$ be a real semi-simple, compact Lie group. Consider the Cartan form $\omega\in\Omega^3(G)$, 
which is the bi-invariant form defined at the neutral element $e$ by :
\begin{equation*}
\omega_e(\xi,\eta,\zeta):=\left<[\xi,\eta],\zeta\right>,
\end{equation*}
where $\xi,\eta,\zeta\in \mathfrak{g}$ (the Lie algebra of $G$) and $<.,.>$ is the Killing form. The form $\omega$ is closed because it is 
bi-invariant, and it is nondegenerate because the Killing form is nondegenerate and $[\mathfrak{g},\mathfrak{g}]=\mathfrak{g}$. 
Consider $T\subset G$, a torus. Its Lie algebra $\mathfrak{t}$ is abelian and thus $T$ is 1-isotropic. Thus if $T$ is a maximal torus then 
it is 1-Lagrangian.


\section{Standard multisymplectic vector spaces and manifolds}

In this section, we will be interested in a special class of multisymplectic vector spaces and manifolds, important in applications 
of multisymplectic geometry to classical field theories.

\begin{defa}
Let $V$ be a vector space and $k>1$. We say that $V$ is a standard $k$-plectic vector space, if $(V,\omega)$ is a $k$-plectic vector space and there exists a subspace $W\subset V$ such that:
\begin{enumerate}[label=(\arabic*)]
\item $\forall u,v\in W, \ \iota_{u\wedge v}\omega=0$
\item $dim(W)=dim(\Lambda^k((V/W)^*))$
\item $codim(W)>k\,$.
\end{enumerate}
\end{defa}

Let us also consider the following condition :
\begin{equation*}
 dim(W)\geq codim(W)\, .  \tag{\textit{3'} }
\end{equation*}

Concentrating on the higher degree cases ($k>1$) we then have the following relations between these conditions:

\begin{lem}
Let $(V,\omega)$ be a $k$-plectic vector space with $k>1$. Then :
\begin{enumerate}[label=(\roman*)]
\item conditions $(1)$ and $(2)$ imply that the map $\omega^{\sharp}|_{W}:W\rightarrow \Lambda^kV^*$ induces a linear isomorphism :
\begin{equation*}
\chi:W\rightarrow \Lambda^k(V/W)^*, 
\end{equation*}
\item if conditions $(1)$ and $(2)$ are satisfied, then condition $(3)$ is equivalent to condition $(3')$,
\item if $(V,\omega)$ is standard, then $dim(W)\geq 2$.
\end{enumerate}
\end{lem}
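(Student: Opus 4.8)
The plan is to establish the three items in turn: (i) contains the genuine linear algebra, while (ii) and (iii) then follow from (i) together with an elementary fact about binomial coefficients.

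For (i), I would first record the elementary identification: writing $\pi\colon V\to V/W$ for the quotient map, the pullback $\pi^{*}\colon\Lambda^{k}(V/W)^{*}\to\Lambda^{k}V^{*}$ is injective, with image the annihilator subspace $\{\alpha\in\Lambda^{k}V^{*}\ \vert\ \iota_{w}\alpha=0\ \text{for all }w\in W\}$ (this is seen by choosing a complement $C$ with $V=W\oplus C$, using $\pi\vert_{C}$ to identify $C\cong V/W$, and computing in a basis of $V^{*}$ adapted to the splitting). Since $\iota_{u\wedge v}\,\omega=\iota_{v}\bigl(\iota_{u}\omega\bigr)=\iota_{v}\bigl(\omega^{\sharp}(u)\bigr)$, condition $(1)$ is exactly the statement that $\omega^{\sharp}(u)$ lies in that image for every $u\in W$; hence $\omega^{\sharp}\vert_{W}$ factors uniquely as $\pi^{*}\circ\chi$ with $\chi\colon W\to\Lambda^{k}(V/W)^{*}$ linear. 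Nondegeneracy of $\omega$ makes $\omega^{\sharp}$, and hence $\chi$, injective, while condition $(2)$ forces $\dim W=\dim\Lambda^{k}(V/W)^{*}$; thus $\chi$ is an isomorphism.

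For (ii), set $n:=\operatorname{codim}(W)=\dim(V/W)$; by (i) we have $\dim W=\dim\Lambda^{k}(V/W)^{*}=\binom{n}{k}$, so $(3')$ reads $\binom{n}{k}\ge n$. Assuming $V\neq\{0\}$, I would first rule out the degenerate cases: $W=V$ is impossible, since $(2)$ would then force $\dim V=\dim\Lambda^{k}(\{0\})^{*}=0$, so $n\ge1$; and $W=\{0\}$ is impossible, since it would give $\binom{n}{k}=0$, hence $n<k$ and $\Lambda^{k}V^{*}=0$, contradicting nondegeneracy. Therefore $n\ge k\ge2$, and it remains to check that $\binom{n}{k}\ge n$ is equivalent to $n>k$. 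If $n>k$, i.e.\ $1\le k\le n-1$, then $\binom{n}{k}\ge n$ by the unimodality and symmetry of the binomial coefficients (the minimum of $\binom{n}{j}$ over $1\le j\le n-1$ being $\binom{n}{1}=\binom{n}{n-1}=n$), so $(3')$ holds. If $n=k$, then $\binom{n}{k}=1<k=n$ because $k>1$, so $(3')$ fails. Hence $(3)\Leftrightarrow(3')$.

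For (iii): if $(V,\omega)$ is standard then $(1),(2),(3)$ all hold, so by (i) we have $\dim W=\binom{n}{k}$ while $(3)$ gives $n=\operatorname{codim}(W)\ge k+1$; since then $1\le k\le n-1$, the same binomial estimate yields $\dim W=\binom{n}{k}\ge n\ge k+1\ge3$, in particular $\dim W\ge2$. I expect the only mildly delicate point in the whole argument to be the bookkeeping in step (i) — phrasing cleanly the identification of $\Lambda^{k}(V/W)^{*}$ with the annihilator subspace of $\Lambda^{k}V^{*}$ and checking its compatibility with contraction by elements of $W$ — together with being careful in (ii) to exclude the degenerate possibilities $W=\{0\}$ and $W=V$ (via nondegeneracy of $\omega$) before invoking the combinatorial comparison.
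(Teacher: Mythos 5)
Your proof is correct and follows essentially the same route as the paper: part (i) via the identification of $\pi^*(\Lambda^k(V/W)^*)$ with the annihilator of $W$ inside $\Lambda^k V^*$, and parts (ii) and (iii) via the dimension count $\dim(W)=\binom{c}{k}$ compared against $c=\mathrm{codim}(W)$. Your (ii) and (iii) are if anything slightly cleaner --- you argue directly from the unimodality of the binomial coefficients rather than by the paper's case-splitting contradictions, obtain the stronger bound $\dim(W)\ge 3$ in (iii), and are more explicit than the paper about excluding the degenerate possibility $V=\{0\}$.
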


\noindent {\bf Remark.} In reference \cite{leodieg} these multisymplectic vector spaces are called of type $(k{+}1,0)$.

\begin{proof}
We denote the projection $V\rightarrow V/W$ by $\pi$. Then the subspace $\pi^*(\Lambda^k(V/W)^*)\subset \Lambda^kV^*$ is given by :

\begin{equation*}
\pi^*(\Lambda^k(V/W)^*)=\{\eta\in \Lambda^kV^* \ | \ \iota_v\eta=0, \ \forall v\in W\}.
\end{equation*}
By condition $(1)$, $\omega^{\sharp}(w)\in \pi^*(\Lambda^k(V/W)^*)$ whenever $w\in W$ ; thus $\omega^{\sharp}$ induces a 
unique injective linear map $\chi:W\rightarrow \Lambda^k(V/W)^*$ such that $\pi^*\circ\chi=\omega^{\sharp}|_W$. Moreover, $\chi$ is a linear isomorphism by condition $(2)$ ; thus proving the first assertion. 

Now put $d=dim(W)$, $c=codim(W)$. Then $dim(V)=c+d$. Assume conditions $(1)-(3)$ to hold, and $d<c$. By condition $(2)$, 
$d=\binom{c}{k}$, thus $d=\frac{c(c-1)...(c-(k-1))}{k(k-1)...1}\geq c$. This contradiction shows that conditions $(1)-(3)$ imply condition $(3')$. 

Now assume conditions $(1),(2),(3')$ to hold and $c\leq k$. Since $c<k$ is easily seen to contradict $(2)$, then $c=k$ and $d=1$. By $(3')$, $dim(V)=2$ and thus $1=dim(\Lambda^k(V/W)^*)$, implying $k=1$, contradicting the assumptions.
Thus the conditions $ (1),(2),(3')$ imply conditions $(1)-(3)$, and the second 
assertion is proven. 

Now assume that $d\leq1$. If $d=0$, then $\Lambda^kV^*=\{0\}$, contradicting the fact that $\omega$ is nondegenerate. Now if $d=1$, we have $1=\binom{dim(V)-1}{k}$ and therefore $k+1=dim(V)$, implying $c=k$ ; this contradiction proves the last assertion.
 
\end{proof}

If $(V,\omega)$ is a standard $k$-plectic vector space, with $k>1$, then the subspace $W$ satisfying Definition 2 is unique as shows the following :

\begin{prop}
Let $(V,\omega)$ be a standard $k$-plectic vector space, with $k>1$, and $W,\widetilde{W}$ two subspaces satisfying Definition 2. Then $W=\widetilde{W}$.
\end{prop}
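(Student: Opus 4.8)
The plan is to first reduce the statement to the inclusion $\widetilde W \subseteq W$, and then to establish that inclusion by transporting the $1$-isotropy of $\widetilde W$ into $\Lambda^{k}(V/W)^{*}$ via the isomorphism $\chi$ of Lemma 1. Writing $c = \operatorname{codim}_{V} W$ and $\tilde c = \operatorname{codim}_{V}\widetilde W$, condition $(2)$ gives $\dim V = c + \binom{c}{k} = \tilde c + \binom{\tilde c}{k}$; since $x \mapsto x + \binom{x}{k}$ is strictly increasing on $\mathbb{N}$ (consecutive differences being $1 + \binom{x}{k-1} \geq 1$), we get $\tilde c = c$ and hence $\dim \widetilde W = \binom{c}{k} = \dim W$. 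So it is enough to show $\pi(\widetilde W) = \{0\}$ for the projection $\pi \colon V \to V/W$: then $\widetilde W \subseteq \ker \pi = W$, and equality follows from the dimension count.

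The two ingredients I would use repeatedly are: (a) by Lemma 1(i), $\iota_{w}\omega = \pi^{*}(\chi(w))$ for all $w \in W$, where $\chi \colon W \to \Lambda^{k}(V/W)^{*}$ is the isomorphism there; and (b) the naturality $\iota_{v} \circ \pi^{*} = \pi^{*} \circ \iota_{\pi(v)}$ for $v \in V$. The first step is to bound $\dim \pi(\widetilde W)$. Let $v, v' \in \widetilde W$ and $w \in W$. Since $\widetilde W$ is $1$-isotropic, $\iota_{v \wedge v'}\omega = 0$; contracting once more with $w$ and using (a), (b) (and that contractions anticommute),
\begin{equation*}
0 \;=\; \iota_{w}\bigl(\iota_{v \wedge v'}\omega\bigr) \;=\; \iota_{v'}\iota_{v}\bigl(\iota_{w}\omega\bigr) \;=\; \iota_{v'}\iota_{v}\,\pi^{*}\chi(w) \;=\; \pi^{*}\bigl(\iota_{\pi(v)\wedge \pi(v')}\,\chi(w)\bigr).
\end{equation*}
As $\pi^{*}$ is injective and $\chi$ is onto $\Lambda^{k}(V/W)^{*}$, we conclude $\iota_{\pi(v)\wedge\pi(v')}\zeta = 0$ for every $\zeta \in \Lambda^{k}(V/W)^{*}$. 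Since $\dim(V/W) = c > k$ by condition $(3)$, the elementary fact that the only $\beta \in \Lambda^{2}(V/W)$ annihilating all of $\Lambda^{k}(V/W)^{*}$ under contraction is $\beta = 0$ forces $\pi(v) \wedge \pi(v') = 0$. Thus any two vectors of $\pi(\widetilde W)$ are proportional, i.e. $\dim \pi(\widetilde W) \leq 1$.

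It remains to rule out $\dim \pi(\widetilde W) = 1$. Suppose it holds; pick $v_{0} \in \widetilde W$ with $\pi(v_{0}) \neq 0$ and put $W' := \widetilde W \cap W = \ker(\pi|_{\widetilde W})$, so that $\dim W' = \dim \widetilde W - 1 = \binom{c}{k} - 1$. For $w' \in W'$ we have $v_{0}, w' \in \widetilde W$, so, again by (a), (b),
\begin{equation*}
0 \;=\; \iota_{v_{0} \wedge w'}\omega \;=\; -\,\iota_{v_{0}}\bigl(\iota_{w'}\omega\bigr) \;=\; -\,\iota_{v_{0}}\,\pi^{*}\chi(w') \;=\; -\,\pi^{*}\bigl(\iota_{\pi(v_{0})}\chi(w')\bigr),
\end{equation*}
whence $\iota_{\pi(v_{0})}\chi(w') = 0$. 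Therefore $\chi(W')$ lies in the kernel of $\iota_{\pi(v_{0})}$ acting on $\Lambda^{k}(V/W)^{*}$, which, since $\pi(v_{0}) \neq 0$ and $\dim(V/W) = c$, has dimension $\binom{c-1}{k}$. As $\chi$ is injective, $\binom{c}{k} - 1 \leq \binom{c-1}{k}$, i.e. $\binom{c-1}{k-1} \leq 1$ by Pascal's rule; but condition $(3)$ gives $c - 1 \geq k$ and $k > 1$ gives $k - 1 \geq 1$, so $\binom{c-1}{k-1} \geq \binom{k}{k-1} = k \geq 2$, a contradiction. Hence $\pi(\widetilde W) = \{0\}$, so $\widetilde W \subseteq W$, and therefore $\widetilde W = W$.

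I expect the crux to be the second paragraph. Direct attempts to compare $W$ and $\widetilde W$, or to describe $W$ purely in terms of $(V,\omega)$, tend to be circular: one checks easily, for instance, that $W = W^{\perp,1}$, but this self-referential identity does not determine $W$. The decisive move is instead to push the isotropy relation of $\widetilde W$ forward along $\pi^{*}$ and use the surjectivity of $\chi$ to land inside $\Lambda^{k}(V/W)^{*}$, where the hypotheses $\operatorname{codim} W > k$ and $k > 1$ can finally be brought to bear; the second required auxiliary fact, faithfulness of contraction of $2$-vectors on $k$-forms, is a routine exercise in multilinear algebra that I would not spell out in detail.
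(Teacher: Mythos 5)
Your proof is correct and follows essentially the same route as the paper's: you first show (via the $1$-isotropy of $\widetilde W$ and the surjectivity of $\chi$) that $\pi(\widetilde W)$ is at most one-dimensional, which is exactly the paper's statement that $W\cap\widetilde{W}$ has codimension at most one in $\widetilde{W}$, and you then rule out the one-dimensional case by the same dimension count $\binom{c-1}{k-1}\geq k\geq 2$, merely phrased through the kernel of $\iota_{\pi(v_0)}$ on $\Lambda^k(V/W)^*$ where the paper uses the dual picture of $\chi^*$ and the annihilator of $W\cap\widetilde{W}$ in $W^*$. Your preliminary observation that condition $(2)$ forces $\dim W=\dim\widetilde{W}$ is a welcome explicit supplement (the paper uses this tacitly in its dimension count) but does not change the approach.
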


\begin{proof}
First we show that $W\cap \widetilde{W}$ has codimension at most $1$ in $\widetilde{W}$. To do this, assume the opposite : $codim_{\widetilde{W}}(W\cap\widetilde{W})>1$. Then, there exists linearly independent vectors $u,v$ of $\widetilde{W}$ such that $span(u,v)\cap W=\{0\}$ ; thus we can find $\eta\in \Lambda^k(V/W)^*$ such that $\iota_{u\wedge v}\eta\ne0$. But, for all $w\in W$, $\iota_{w}\iota_{u\wedge v}\omega=0$, so there cannot exist a $w\in W$ such that $\eta=\iota_w\omega$, and this contradicts the fact that the map $\chi$ is an isomorphism. 

Now suppose $W\ne \widetilde{W}$. Then there exists a non-zero vector $z\in \widetilde{W}$ such that $span(z)\cap W=\{0\}$. For all $w\in W\cap\widetilde{W}$, $\eta\in \Lambda^{k-1} (V/W)$ :

\begin{equation*}
\chi^*(\pi(z)\wedge \eta)(w)=(\pi(z)\wedge\eta)(\chi(w))=\omega(w,z,\eta)=0,
\end{equation*}
where $\chi^*$ denotes the dual of the map $\chi$, and $\pi:V\rightarrow V/W$ is the canonical projection. The above equation is well-defined because for $w\in W$, $\iota_w\omega$ depends only on its evaluation on element of $\Lambda^{k} (V/W)$, because of condition $(1)$ in Definition 2. Denote $Z=span(z)$. The above computation shows that:

\begin{equation*}
\chi^*(\pi(Z)\wedge\Lambda^{k-1}(V/W))\subset ann_{W^*}(W\cap\widetilde{W}),
\end{equation*}
where $ann_{W^*}(W\cap\widetilde{W})=\{\eta\in W^* \ | \ \eta(w)=0, \ \forall w\in W\cap\widetilde{W}\}$. This implies, together with $ codim_{\widetilde{W}}(W\cap\widetilde{W})\leq1$, that $dim(\chi^*(\pi(Z)\wedge\Lambda^{k-1}(V/W)))\leq 1$. Furthermore :

\begin{equation*}
dim(\pi(Z)\wedge\Lambda^{k-1}(V/W))=dim(\Lambda^{k-1}(V/W))>1,
\end{equation*}
because $codim(W)>k$. This shows a contradiction, and thus the Proposition. 

\end{proof}

The preceding proposition allows to denote such a subspace by $W_{\omega}$ and motivates the next definition :

\begin{defa}
Let $M$ be a manifold, $k>1$,  and $\omega\in\Lambda^{k+1}(T^*M)$. We say that $(M,\omega)$ is a standard $k$-plectic manifold 
if $(M,\omega)$ is a $k$-plectic manifold and if for each $p\in M$, $(T_pM,\omega_p)$ is a standard $k$-plectic vector space. For all 
$p\in M$ the unique subspace   of $T_pM$ satisfying Definition 4 is denoted by $W_{\omega}(p)$ or simply $W(p)$.
\end{defa}

The remainder of this section is dedicated to showing that standard multisymplectic vector spaces are in fact symplectomorphic to a 
canonical $k$-plectic model that we will describe now.

\begin{prop}
Let $(V,\omega)$ be a standard $k$-plectic vector space. Then the subspace $W_{\omega}$ is $1$-Lagrangian. Moreover, there 
exists a $k$-Lagrangian vector space $L\subset V$ complementary to $W_{\omega}$ and the map $\chi$ induces (for all choices 
of such $L$) an isomorphism:
\begin{equation*}
W_{\omega}\cong \Lambda^k(L^*). 
\end{equation*}
\end{prop}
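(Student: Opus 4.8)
The plan is to prove the statement in three parts: first that $W:=W_{\omega}$ is $1$-Lagrangian, then that a $k$-Lagrangian complement $L$ of $W$ exists, and finally that $\chi$ furnishes the asserted isomorphism. For the first part, observe that $W\subset W^{\perp,1}$ is just condition $(1)$ of Definition~4, so $W$ is automatically $1$-isotropic; for the reverse inclusion I would take $v\in W^{\perp,1}$, rewrite $\iota_{v\wedge w}\omega=0$ for all $w\in W$ as $\iota_{w}(\iota_{v}\omega)=0$, and deduce that $\iota_{v}\omega$ lies in $\pi^{*}(\Lambda^{k}(V/W)^{*})=\{\eta\in\Lambda^{k}V^{*}\mid\iota_{w}\eta=0\ \forall w\in W\}$, the subspace exhibited in the proof of Lemma~1. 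Because $\pi^{*}\circ\chi=\omega^{\sharp}|_{W}$ with $\chi$ an isomorphism, this subspace equals $\omega^{\sharp}(W)$; hence $\iota_{v}\omega=\iota_{w}\omega$ for some $w\in W$, and injectivity of $\omega^{\sharp}$ gives $v=w\in W$.

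For the existence of $L$, I would begin with an arbitrary linear complement $L_{0}$ of $W$, so that $\pi|_{L_{0}}\colon L_{0}\xrightarrow{\sim}V/W$, and split $V^{*}=\mathrm{ann}(W)\oplus\mathrm{ann}(L_{0})$, which induces a bidegree decomposition of $\Lambda^{k+1}V^{*}$. Condition~$(1)$ annihilates all components of $\omega$ of $\mathrm{ann}(L_{0})$-degree at least $2$, so $\omega=\omega_{0}+\omega_{1}$ with $\omega_{0}\in\Lambda^{k+1}L_{0}^{*}$ and $\omega_{1}\in\Lambda^{k}L_{0}^{*}\wedge W^{*}$ (under the obvious identifications $\mathrm{ann}(W)\cong L_{0}^{*}$, $\mathrm{ann}(L_{0})\cong W^{*}$); moreover $w\mapsto\iota_{w}\omega=\iota_{w}\omega_{1}$ (as $\iota_{w}\omega_{0}=0$) is, after identifying $\Lambda^{k}(V/W)^{*}\cong\Lambda^{k}L_{0}^{*}$ via $\pi|_{L_{0}}$, the isomorphism $\chi$. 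A short contraction computation then shows that $L_{0}$ is $k$-isotropic if and only if $\omega_{0}=0$ — contracting $\omega$ with $k{+}1$ vectors of $L_{0}$ kills $\omega_{1}$ and returns $\omega_{0}$ — and, further, that once $\omega_{0}=0$ the space $L_{0}$ is in fact $k$-Lagrangian: writing $v=v_{L}+v_{W}\in L_{0}^{\perp,k}$ with $v_{L}\in L_{0}$, $v_{W}\in W$, one obtains $\chi(v_{W})(v_{1},\dots,v_{k})=0$ for all $v_{i}\in L_{0}$ (identifying $V/W\cong L_{0}$), forcing $\chi(v_{W})=0$ and hence $v_{W}=0$ by injectivity of $\chi$. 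Thus everything reduces to choosing the complement so that its top component $\omega_{0}$ vanishes.

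To achieve this I would replace $L_{0}$ by the graph $L_{\phi}:=\{x+\phi(x)\mid x\in L_{0}\}$ of a linear map $\phi\colon L_{0}\to W$ (still a complement of $W$) and recompute the top component in the dual basis adapted to $L_{\phi}$. Because $\omega$ has no $\mathrm{ann}(L_{0})$-component of degree $\ge 2$, this substitution is linear in $\phi$, with no quadratic terms, and one gets
\begin{equation*}
(\omega_{0})_{L_{\phi}}=\omega_{0}+\sum_{m}\beta_{m}\wedge\psi_{m},
\end{equation*}
where $\{\beta_{m}\}$ is the image of a basis of $W$ under $\chi$ (viewed in $\Lambda^{k}L_{0}^{*}$), hence a basis of $\Lambda^{k}L_{0}^{*}$, and the $\psi_{m}\in L_{0}^{*}$ are arbitrary covectors parametrised by $\phi$. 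Since $\bigl\{\sum_{m}\beta_{m}\wedge\psi_{m}\bigr\}=\bigl(\mathrm{span}\{\beta_{m}\}\bigr)\wedge L_{0}^{*}=\Lambda^{k}L_{0}^{*}\wedge L_{0}^{*}=\Lambda^{k+1}L_{0}^{*}$ — every decomposable $(k{+}1)$-covector being such a product — I may pick $\phi$ with $\sum_{m}\beta_{m}\wedge\psi_{m}=-\omega_{0}$; then $L:=L_{\phi}$ has vanishing top component and is $k$-Lagrangian by the preceding paragraph.

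Finally, any complement $L$ of $W$ yields an isomorphism $(\pi|_{L})^{*}\colon\Lambda^{k}(V/W)^{*}\xrightarrow{\sim}\Lambda^{k}(L^{*})$, and composing it with $\chi$ gives the desired isomorphism $W\cong\Lambda^{k}(L^{*})$ (explicitly $w\mapsto(\iota_{w}\omega)|_{L}$), valid in particular for the $L$ constructed above and for any other $k$-Lagrangian complement. I expect the main obstacle to be this second part, and within it the two points that (i) shearing the complement alters only the $\Lambda^{k+1}L^{*}$-part of $\omega$ — precisely where condition~$(1)$ enters — and (ii) this alteration can realise \emph{every} element of $\Lambda^{k+1}L^{*}$ — where surjectivity of $\chi$ together with surjectivity of the wedge map $\Lambda^{k}L^{*}\otimes L^{*}\to\Lambda^{k+1}L^{*}$ is used. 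A subsidiary care is to distinguish ``$k$-isotropic'' from ``$k$-Lagrangian'', the bridge being once more the injectivity of $\chi$.
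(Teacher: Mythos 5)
Your proposal is correct, and its overall architecture coincides with the paper's: establish $1$-isotropy from condition $(1)$ and upgrade to $1$-Lagrangian via the isomorphism $\chi$; take an arbitrary complement $L_0$ of $W_\omega$ and shear it by the graph of a linear map $L_0\to W_\omega$, using condition $(1)$ to see that the effect on $\omega$ restricted to the sheared complement is \emph{linear} in the shear; observe that a $k$-isotropic complement of $W_\omega$ is automatically $k$-Lagrangian; and obtain the final isomorphism by composing $\chi$ with $(\pi|_L)^*$. Your direct argument for $W_\omega^{\perp,1}\subset W_\omega$ (showing $\iota_v\omega\in\pi^*(\Lambda^k(V/W_\omega)^*)=\omega^\sharp(W_\omega)$ and invoking injectivity of $\omega^\sharp$) is a clean reformulation of the paper's argument by contradiction.

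The one place where you genuinely diverge is in producing the shear. The paper solves the linear equation explicitly: setting $\Phi=\chi\circ A$, the isotropy condition becomes an identity that is satisfied by the concrete choice $\Phi=-\tfrac{1}{k+1}\,T$ with $T(v)=\iota_v\omega|_{\widetilde L}$, an averaging formula that hands you the Lagrangian complement in closed form. You instead argue by surjectivity: the achievable perturbations of the top component $\omega_0$ form the image of a linear map whose image is $\Lambda^k L_0^*\wedge L_0^*=\Lambda^{k+1}L_0^*$ (using that $\chi$ is onto and that every element of $\Lambda^{k+1}L_0^*$ is a sum of decomposables), so some $\phi$ cancels $\omega_0$. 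Both are valid; the paper's route is shorter and constructive, while yours makes transparent \emph{why} a solution must exist (the bidegree decomposition $\omega=\omega_0+\omega_1$ forced by condition $(1)$, plus surjectivity of the wedge map) and would survive in situations where no symmetric averaging formula is available. No gaps.
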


\begin{proof}

Condition $(1)$ in Definition 2 implies that $W_{\omega}$ is $1$-isotropic. Now if $w\in W_{\omega}^{\perp,1}$ but 
$w\notin W_{\omega}$, then we can find $\eta\in \Lambda^k(V/W_{\omega})^*$ such that $\iota_w\eta\ne0$. 
But, for all $u\in W_{\omega}$, $\iota_u\iota_w\omega=0$ and thus there cannot exist a $u\in W_{\omega}$ such 
that $\eta=\iota_u\omega$. This property contradicts the fact that the map $\chi$ is an isomorphism, and therefore
proves that $W_{\omega}$ is $1$-Lagrangian.

Now let $\widetilde{L}$ be any subspace complementary to $W_{\omega}$. We may canonically identify $V/W_{\omega}$ and 
$\widetilde{L}$ since the restriction to $\widetilde{L}$ of the projection $\pi:V\rightarrow V/W_{\omega}$ is an isomorphism. Thus 
we have a canonical isomorphism $\chi:W_{\omega}\stackrel{\cong}{\longrightarrow} \Lambda^k(\widetilde{L}^*)$. We will search 
for a $k$-Lagrangian complement of the form $L=\{v+Av \ | \ v\in \widetilde{L}\}$ for some linear map 
$A:\widetilde{L}\rightarrow W_{\omega}$. For $L$ to be $k$-Lagrangian, it has to verify that $L\subset L^{\perp,k}$, i.e.,
for all $v_j\in \widetilde{L}$, $j=1,...,k{+}1$ :
\begin{equation*}
\omega(v_1+A v_1,...,v_{k+1}+A v_{k+1})=0.
\end{equation*}
This condition suffices here because assuming that there exists an element $u\in L^{\perp,k} \backslash L$, we may write 
$u=v+w$, with $v\in L$ and $w\in W_{\omega}$. For $u_1,...,u_k\in L$ we compute then:
\begin{align*}
\omega(u,u_1,...,u_k)&=\omega(v,u_1,...,u_k)+\omega(w,u_1,...,u_k) \\
&=\omega(w,u_1,...,u_k)=0
\end{align*}
because $v\in  L\subset L^{\perp,k}$, and $u\in L^{\perp,k}$. Thus we obtain $w\in W_{\omega}\cap L^{\perp,k}$. But then for 
all $v_j=x_j+y_j$, $v_j\in V$, $x_j\in L$ and $y_j\in W_{\omega}$ :
\begin{align*}
\omega(w,v_1,...,v_k)&=\omega(v,u_1,...,u_k)+\omega(w,u_1,...,u_k) \\
&=0.
\end{align*}
By the nondegeneracy of $\omega$, we obtain $w=0$, thus $u=v\in L$ which is a contradiction to $u\in L^{\perp,k} \backslash L$.\\ 

\noindent Now we return to the construction of the linear map $A$. We have:
\begin{align*}
\omega(v_1+A v_1,...,v_{k+1}+A v_{k+1})&=\omega(v_1,...,v_{k+1}) \\
&+ \sum_{j=1,...,k+1} (-1)^{j+1}\omega(Av_j,v_1,...,\widehat{v_j},...,v_{k+1}).
\end{align*}
Let $\Phi:=\chi\circ A$, then the Lagrangian condition reads as follows:
\begin{equation*}
\omega(v_1,...,v_{k+1})=- \sum_{j=1,...,k+1} (-1)^{j+1}\Phi(v_j)(v_1,...,\widehat{v_j},...,v_{k+1}).
\end{equation*}
We denote by $T$ the application $T:\widetilde{L}\rightarrow \Lambda^k(\widetilde{L}^*), \ v\mapsto \iota_v\omega$. 
If $\Phi=-\frac{1}{k+1}T$, then the above condition is verified. Thus the map $A:=(\chi)^{-1}\circ \Phi$ 
has the property that its graph $L=\Gamma_{\Phi}$ is a $k$-Lagrangian space, complementary to $W_{\omega}$.
\end{proof}

\begin{defa}
Let $V$ be a vector space and $\omega_{can}$ the canonical $(k{+}1)$-form on the space $\mathcal{V}:=V\oplus \Lambda^k(V^*)$
given by:
\begin{equation*}
\omega_{can}(v_1\oplus \alpha_1,...,v_{k+1}\oplus\alpha_{k+1})=
\sum_{i=1}^k (-1)^{i+1}\alpha_{i}(v_1,...,v_{i{-}1},\widehat{v_i},v_{i+1},...,v_{k+1}),
\end{equation*}
for all $v_j\in V$, and $\alpha_j\in \Lambda^k(V^*)$. Then $\omega_{can}$ is a $k$-plectic form. We call $(\mathcal{V},\omega_{can})$ 
a canonical 
$k$-plectic vector space. 
\end{defa}

\begin{lem}
Let $(V,\omega)$ be a $k$-plectic vector space with $k>1$. Then $(V,\omega)$ is isomorphic to a canonical  $k$-plectic vector space 
$(L\oplus\Lambda^k(L^*),\omega_{can})$ if and only if $(V,\omega)$ is standard.
\end{lem}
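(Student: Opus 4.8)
The plan is to prove both implications of this equivalence separately, using the structural results already established.

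For the ``only if'' direction, suppose $(V,\omega)$ is isomorphic to a canonical $k$-plectic vector space $(\mathcal{V},\omega_{can})$ with $\mathcal{V}=L\oplus\Lambda^k(L^*)$. It suffices to exhibit the required subspace $W$ in $\mathcal{V}$, since the property of being standard transports under linear isomorphisms preserving the forms. The natural candidate is $W:=\{0\}\oplus\Lambda^k(L^*)$. I would check the three conditions of Definition 2 directly from the formula for $\omega_{can}$: condition (1) holds because $\omega_{can}(0\oplus\alpha,0\oplus\beta,\cdots)$ vanishes (the only surviving terms require a nonzero vector component in the first $k$ slots); for condition (2), $\dim W=\dim\Lambda^k(L^*)=\dim\Lambda^k((\mathcal{V}/W)^*)$ since $\mathcal{V}/W\cong L$; and for condition (3), $\mathrm{codim}(W)=\dim L$, which must exceed $k$ — here I would need to observe that for $\omega_{can}$ to be nondegenerate when $k>1$ one needs $\dim L>k$ (otherwise $\Lambda^k(L^*)$ is too small or the contraction map fails to be injective; if $\dim L\le k$ then either $\Lambda^k(L^*)=0$ forcing $\mathcal V=L$ and $\omega_{can}=0$, or $\dim L=k$ and a short argument shows $\omega_{can}^\sharp$ kills $\Lambda^k(L^*)$-vectors). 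I would also remark that nondegeneracy of $\omega_{can}$ itself should be verified (it is asserted in Definition 7 without proof, so a line confirming $\omega_{can}^\sharp$ is injective under $\dim L>k$ may be warranted).

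For the ``if'' direction, suppose $(V,\omega)$ is standard. By Proposition 2 there is a unique $1$-Lagrangian subspace $W_\omega$, and it provides a $k$-Lagrangian complement $L\subset V$ together with the isomorphism $\chi\colon W_\omega\xrightarrow{\cong}\Lambda^k(L^*)$ (identifying $V/W_\omega$ with $L$ via the projection). I would then define the linear isomorphism $F\colon V\to L\oplus\Lambda^k(L^*)$ by $F(x+w)=x\oplus\chi(w)$ for $x\in L$, $w\in W_\omega$, and verify that $F^*\omega_{can}=\omega$. To do this I would decompose arbitrary vectors $v_j=x_j+w_j$ and expand $\omega(v_1,\dots,v_{k+1})$ by multilinearity: all terms with two or more $w$-entries vanish by condition (1); the term with all $x$-entries vanishes because $L$ is $k$-Lagrangian; so only the terms with exactly one $w_j$ survive, giving $\sum_j(-1)^{j+1}\omega(w_j,x_1,\dots,\widehat{x_j},\dots,x_{k+1})=\sum_j(-1)^{j+1}\chi(w_j)(x_1,\dots,\widehat{x_j},\dots,x_{k+1})$, which is exactly $\omega_{can}(F(v_1),\dots,F(v_{k+1}))$ up to matching the sign conventions in Definition 7.

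The main obstacle I anticipate is bookkeeping rather than conceptual: reconciling the index and sign conventions between the expansion of $\omega$ and the defining formula for $\omega_{can}$ (note the formula in Definition 7 sums $i$ from $1$ to $k$ rather than $k+1$, which looks like it may be a typo for $k+1$, and the placement of the hat), and being careful that $\chi$ is genuinely the contraction map $w\mapsto\iota_w\omega$ read through the identification $V/W_\omega\cong L$, so that $\chi(w)(x_1,\dots,x_k)=\omega(w,x_1,\dots,x_k)$ holds on the nose. A secondary point requiring care is the nondegeneracy bookkeeping in the ``only if'' direction, namely ruling out $\dim L\le k$; this is where the hypothesis $k>1$ and the standing codimension condition interact, and it should be spelled out rather than left implicit.
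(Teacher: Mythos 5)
Your proposal follows the same route as the paper: the same candidate $W=\{0\}\oplus\Lambda^k(L^*)$ for the ``only if'' direction, and the same isomorphism $id_L\oplus\chi$ (your $F$, the paper's $\gamma$) together with the same multilinear expansion --- terms with two $W$-entries die by condition $(1)$, the all-$L$ term dies by $k$-Lagrangianity --- for the ``if'' direction. You are in fact more explicit than the paper in checking conditions $(1)$--$(3)$ of Definition 2 (the paper only verifies that $\Lambda^k(L^*)$ is $1$-Lagrangian), and you are right that the sum in Definition 7 should run to $k+1$.

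One step in your plan would fail as written: the claim that for $\dim L=k$ ``a short argument shows $\omega_{can}^{\sharp}$ kills $\Lambda^k(L^*)$-vectors.'' It does not. From the defining formula, $\iota_{0\oplus\alpha}\omega_{can}$ restricted to $L\oplus 0$ is just $\alpha$, which is nonzero for $\alpha\ne 0$ as soon as $\dim L\ge k$; indeed for $\dim L=k$ the space $\mathcal V$ is $(k+1)$-dimensional and $\omega_{can}$ is a volume form on it (e.g.\ $\omega_{can}(e_1\oplus 0,\dots,e_k\oplus 0,\,0\oplus e^1\wedge\cdots\wedge e^k)=(-1)^k$), hence nondegenerate. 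So nondegeneracy cannot be used to exclude $\dim L=k$, and as literally written Definition 7 then produces a canonical $k$-plectic space that is \emph{not} standard (by Lemma 1(iii) a standard space has $\dim W_\omega\ge 2$ and $\mathrm{codim}(W_\omega)>k$, forcing $\dim V\ge k+3>k+1$), which would make the ``only if'' direction false. The correct fix is to read Definition 7 with the implicit hypothesis $\dim L>k$, exactly parallel to the requirement $\dim Q\ge k+1$ in Example 2; the paper itself glosses over this point, so your instinct to address condition $(3)$ explicitly is sound --- it just has to be resolved by a dimension hypothesis, not by a degeneracy argument.
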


\begin{proof}
Let $(V,\omega)$ be a standard $k$-plectic vector space, and $L$ a $k$-Lagrangian subspace complementary to $W_{\omega}$. 
We define $\omega_{can}$ as above on the space $L\oplus \Lambda^k(L^*)$. Let 
$\gamma:=id_L\oplus \chi:L\oplus W_{\omega}\rightarrow L\oplus\Lambda^k(L^{*})$, where we again canonically 
identified  $V/W_{\omega}$
and $L$. Then $\gamma$ is a linear isomorphism 
and furthermore we find:
\begin{align*}
\omega_{can}(\gamma(u_1\oplus w_1),...,\gamma(u_{k+1}\oplus w_{k+1}))&=\omega_{can}(u_1\oplus \iota_{w_1}\omega),...,u_{k+1}\oplus \iota_{w_{k+1}}\omega)) \\
&= \sum_{j=1...k+1}(-1)^{j+1}\iota_{w_j}\omega(u_1,...,\widehat{u_j},...,u_{k+1}) \\ 
&=\omega(u_1\oplus w_1,...,u_{k+1}\oplus w_{k+1}),
\end{align*}
i.e. $\gamma^*\omega_{can}=\omega$. Therefore, the $k$-plectic space $(V,\omega)$ is symplectomorphic to 
$(L\oplus \Lambda^k(L^*),\omega_{can})$. \\

\noindent To show the converse, first note that if $(L\oplus\Lambda^k(L^*),\omega_{can})$ is 
a canonical $k$-plectic vector space, then the space $L$ identified with $L\times\{0\}\subset L\oplus \Lambda^k(L^*)$, is $k$-Lagrangian, 
and the space $ \Lambda^k(L^*)$, identified with $\{0\}\times \Lambda^k(L^*)\subset L\oplus \Lambda^k(L^*)$ is $1$-Lagrangian. Indeed :

\begin{equation*}
\omega_{can}((v_1,0),...,(v_{k+1},0))=0,
\end{equation*}
and if $\omega_{can}((v,\alpha),(v_1,0),...,(v_k,0))=0$ for all $v_j\in L$, then $\alpha(v_1,...,v_k)=0$ and then $\alpha=0$. 
This shows that $L$ is $k$-Lagrangian. Moreover :

\begin{equation*}
\omega_{can}((0,\alpha),(0,\beta),(v_1,\gamma_1),...,(v_{k-2},\gamma_{k-2})=0,
\end{equation*}
and if $\omega_{can}((v,\alpha),(0,\beta),(v_1,\gamma_1),...,(v_{k-2},\gamma_{k-2})=0$ for all $v_j\in V$ and $\gamma_j\in\Lambda^k(L^*)$, then $\iota_v\beta=0$ and thus $v=0$. This shows that $\Lambda^k(L^*)$ is $1$-Lagrangian. Thus, if a $k$-plectic linear space $(V,\omega)$ is symplectomorphic to a space $(L\oplus\Lambda^k(L^*),\omega_{can})$, then, pulling back the $1$-Lagrangian space $\Lambda^k(L^*)$ to $V$ with this symplectomorphism gives a linear subspace $W\subset V$ satisfying Definition 2.
\end{proof}

\noindent {\bf Remark.} Consider now a manifold $Q$, $k>1$ and $(\Lambda^k(T^*Q),\omega^{\Lambda^k(T^*Q)})$ the $k$-plectic 
structure exposed in Example 2 of Section 1. We have :
\begin{equation*}
T(\Lambda^k(T^*Q))|_Q=TQ\oplus \Lambda^k(T^*Q).
\end{equation*}
Recall that each fiber of $\Lambda^k(T^*Q)$ is $1$-Lagrangian, and $Q$ is $k$-Lagrangian. Thus, at each point $p\in Q$, the form 
$\omega^{\Lambda^k(T^*Q)}$ evaluated at the point $p$ is in fact the canonical form on the space $T_pQ\oplus \Lambda^k(T_p^*Q)$. Using the coordinate expression of the form :
\begin{equation*}
\omega^{\Lambda^k(T^*Q)}|_U=- \sum_{i_1,...,i_k} dp_{i_1,...,i_k}\wedge dq^{i_1}\wedge...\wedge dq^{i_k},
\end{equation*}
with $U\subset Q$ an open set, $(q^i)$ coordinates on $U$ and $(p_i)$ coordinates on the fibers of $T^*U$, we see that 
at any point $\alpha_q$ of 
$\Lambda^k(T^*Q)$, the fiber $\Lambda^k(T_q^*Q)$ is $1$-Lagrangian and the tangent space at $\alpha_q$
satisfies the conditions of Definition 4. This implies that 
$(\Lambda^k(T^*Q),\omega^{\Lambda^k(T^*Q)})$ is a standard $k$-plectic manifold.


\section{Normal forms of Lagrangian submanifolds of standard multisymplectic manifolds}

In this section we give a proof of the main result, that first appeared in equivalent form  as Lemma 2.1 
in Geoffrey Martin's 1988 article \cite{Martin1988}:

\begin{thm}
Let $(M,\omega)$ be a standard $k$-plectic manifold with $k>1$. Let the
distribution $W\subset TM$ be defined as  $W:=\cup_{p\in M} W_{\omega}(p)$ and let $L$ be a $k$-Lagrangian 
submanifold of $M$ complementary to $W$ (that is $T_pM=T_pL\oplus W_p$ for all $p$ in $L$).
If $W$ is integrable, there exist open neighborhoods $U$ and $V$ of $L$ in $M$ 
and $\Lambda^k(T^{*}L)$, and a diffeomorphism $\phi:U\rightarrow V$ such that:
\begin{equation*}
\phi|_{L}=id_{L}   \,\, \mbox{and} \,\,  \phi^{*}(\omega^{\Lambda^k(T^{*}L)})=\omega \,\, \mbox{on} \, \, U.
\end{equation*}
\end{thm}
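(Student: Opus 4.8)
The plan is to follow Moser's path method, adapted to the multisymplectic setting as Martin suggested. First I would construct a diffeomorphism between neighborhoods of $L$ that identifies the two ``zero sections'' and, crucially, identifies the foliation of $M$ by leaves of $W$ near $L$ with the fibration of $\Lambda^k(T^*L)$ over $L$. This is exactly where the integrability of $W$ and the foliated tubular neighborhood theorem (Theorem 2 in the Appendix) enter: the leaves of $W$ through points of $L$ form a foliation whose leaf space is locally $L$, and the fiberwise-linear structure on $\Lambda^k(T^*L)$ must be transported via the isomorphism $\chi$ from Proposition 1, applied pointwise along $L$. Call this diffeomorphism $\psi_0$, defined on a neighborhood $U_0$ of $L$; it satisfies $\psi_0|_L = \mathrm{id}_L$. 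After pulling back by $\psi_0$, we are reduced to comparing two $k$-plectic forms $\omega_0 := \psi_0^*(\omega^{\Lambda^k(T^*L)})$ and $\omega_1 := \omega$ on a neighborhood of $L$ inside $\Lambda^k(T^*L)$, both restricting to the canonical form along $L$ (by Lemma 2, since both induce the canonical $k$-plectic structure on each $T_pM \cong T_pL \oplus \Lambda^k(T_p^*L)$), and both having $W$ (now the vertical distribution) as their canonical subbundle.

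Next I would set $\omega_t := \omega_0 + t(\omega_1 - \omega_0)$ for $t \in [0,1]$ and seek a time-dependent vector field $X_t$, tangent to $L$ (in fact vanishing on $L$), whose flow $\phi_t$ satisfies $\phi_t^* \omega_t = \omega_0$; then $\phi := \phi_1^{-1}$ (suitably composed with $\psi_0$) is the desired diffeomorphism. Differentiating $\phi_t^*\omega_t = \omega_0$ and using Cartan's formula together with $d\omega_t = 0$ gives the homotopy equation $\iota_{X_t}\omega_t = -\beta$, where $\omega_1 - \omega_0 = d\beta$ for some $k$-form $\beta$ vanishing (to appropriate order) along $L$. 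The existence of such a primitive $\beta$ is precisely the content of the foliated relative Poincaré lemma (Theorem 3): since $\omega_1 - \omega_0$ is closed and vanishes on $L$, and since we want $\beta$ to vanish on $L$ as well, we apply the relative Poincaré lemma using the retraction onto $L$ provided by the foliated tubular neighborhood. Here it is important that $\beta$ be chosen so that $\iota_v \beta = 0$ for all $v \in W$ — i.e., $\beta$ is ``$W$-horizontal'' in the appropriate sense — which the foliated version of the lemma should deliver, using the leafwise structure.

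The main obstacle, and the point where Martin's argument is genuinely different from Weinstein's, is solving $\iota_{X_t}\omega_t = -\beta$ for $X_t$: because $\omega_t$ is a $(k{+}1)$-form with $k > 1$, the map $\omega_t^\sharp : TM \to \Lambda^k(T^*M)$ is only injective, not an isomorphism, so we cannot simply invert it. The resolution is that $\beta$ does not take arbitrary values: being $W$-horizontal and vanishing on $L$, it lies in the image of $\omega_t^\sharp$ restricted to a complement of $W$. Concretely, using the canonical model, $\omega_t^\sharp$ restricted to the ``$L$-directions'' surjects onto the $W$-horizontal $k$-forms (this is the algebraic content encoded in the definition of $\omega_{can}$ and in the map $T$ appearing in the proof of Proposition 2), so $X_t$ exists and is unique if we require it to be a section of (the bundle corresponding to) $TL$. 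One must check that $X_t$ depends smoothly on $t$, vanishes along $L$ so that $\phi_t|_L = \mathrm{id}_L$, and that its flow exists for $t \in [0,1]$ on a possibly smaller neighborhood of $L$ — the latter being a standard compactness-free argument shrinking the neighborhood, or an argument along $L$ if $L$ is noncompact, using that $X_t$ vanishes on $L$. I would also verify at the outset that $W$ being the canonical subbundle for $\omega_0$ as well as for $\omega_1$ forces $\omega_1 - \omega_0$, and hence $\beta$ and $X_t$, to respect the horizontal/vertical splitting, which is what makes the whole scheme close up.
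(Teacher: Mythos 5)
Your overall architecture matches the paper's proof: build a diffeomorphism $f$ near $L$ from the foliated tubular neighborhood theorem composed with the bundle isomorphism $\chi\colon W|_L\to\Lambda^k(T^*L)$, so that the two forms agree along $L$ and both are $1$-isotropic on $W$; produce a primitive $\mu$ of the difference with $\mu|_L=0$ and $\iota_v\mu=0$ for $v\in W$ via the foliated relative Poincar\'e lemma; then run Moser. However, there is a genuine error at the crux of the argument, namely in how you solve the homotopy equation $\iota_{X_t}\omega_t=-\mu$. You claim that $\omega_t^\sharp$ restricted to the ``$L$-directions'' surjects onto the $W$-horizontal $k$-forms and accordingly seek $X_t$ as a section of (a complement isomorphic to) $TL$. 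This is false, for two reasons. First, if $u$ is horizontal then $\iota_u\omega_t$ is \emph{not} $W$-horizontal: in the canonical model $(L\oplus\Lambda^k(L^*),\omega_{can})$ one has $\iota_{u\oplus 0}\,\omega_{can}\bigl(0\oplus\alpha, v_2\oplus 0,\dots,v_k\oplus 0\bigr)=\pm\,\alpha(u,v_2,\dots,v_k)$, which is nonzero for generic $\alpha$; so the horizontal directions do not even map into the space where $\mu$ lives. Second, the dimensions do not match: the space of $W$-horizontal $k$-forms $\Lambda^k(TM/W)^*$ has fiber dimension $\binom{c}{k}$ with $c=\operatorname{codim}W>k$, while the horizontal distribution has rank $c$, and $\binom{c}{k}>c$ in general, so no surjection from the horizontal directions is possible. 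Indeed, the only horizontal $u$ with $\iota_u\omega_t$ annihilating $W$ is $u=0$, so your equation has no horizontal solution unless $\mu=0$.

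The correct resolution — and the actual point of Martin's argument, encoded in condition $(2)$ of the definition of a standard $k$-plectic space — is the opposite one: restrict $\omega_t^\sharp$ to $W$ itself. Since $\iota_{u\wedge v}\omega_t=0$ for $u,v\in W$ (this persists along the Moser path because both $\omega$ and $\widetilde\omega$ have this property), the map $u\mapsto\iota_u\omega_t$ sends $W$ into $\Lambda^k(TM/W)^*$; it is injective by nondegeneracy of $\omega_t$ (after shrinking the neighborhood), and it is then an isomorphism because $\dim W=\dim\Lambda^k(TM/W)^*$. Since the foliated Poincar\'e lemma places $\mu$ exactly in $\Lambda^k(TM/W)^*$, one obtains a unique $X_t$ with values in $W$ (not in $TL$), vanishing along $L$ because $\mu|_L=0$, and the Moser argument then closes as you describe. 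In the canonical model this is transparent: $\iota_{0\oplus\alpha}\,\omega_{can}(v_1\oplus\alpha_1,\dots,v_k\oplus\alpha_k)=\alpha(v_1,\dots,v_k)$, so $\omega_{can}^\sharp|_W$ is literally the identity on $\Lambda^k(L^*)$. Everything else in your proposal is sound, but as written the displayed equation for $X_t$ is unsolvable, so this step must be replaced.
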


\begin{proof}
It follows from the definition of a standard $k$-plectic manifold and the results found in the 
linear case, that if $L$ is complementary to $W$ we have an isomorphism of vector bundles :
\begin{equation*}
\chi : W|_{L} \rightarrow \Lambda^k(T^{*}L),
\end{equation*}
which is given by contraction of vectors in $W$ with $\omega$, using the identification $TM|_L/W|_L \cong TL$. (Note that we write in the
sequel often $W_x$ for the fiber of the vector bundle $W\to M$ over $x$ in $M$.) 

Using this map we construct the following vector bundle isomorphism :
\begin{equation*}
\Psi:TM|_L=TL\oplus W|_L \cong TL\oplus \Lambda^k(T^{*}L)=T(\Lambda^k(T^{*}L))|_L,
\end{equation*}
acting as the identity on vectors of $TL$, and transforming vectors of $W$ via $\chi$ (i.e., $\Psi=id_{TL}\oplus\chi$). Furthermore, this map is for each $x\in L$ an isomorphism between the multisymplectic vector spaces $(T_xM,\omega_x)$ and $(T_x(\Lambda^kT^*L),\omega^{\Lambda^kT^*L}_x)$. We now wish to find a diffeomorphism $f:U_1\rightarrow U_2$, where $U_1, U_2$ are neighborhoods of $L$ respectively in $M$ and in $\Lambda^k(T^{*}L)$, such that $f|_L=id_L$ and for every $x\in L$, $T_xf=\Psi_x$. 
Such a map $f$ then fulfills $(f^*\omega^{\Lambda^k(T^*L)})|_L=\omega|_L$. 

By the foliated tubular neighborhood theorem, we may find a neighborhood $U$ of $L$ in $M$, a neighborhood $V$ of the zero-section in $W|_L$, and a diffeomorphism $\phi$, which is the identity along $L$, maps each leaf of the foliation to a fiber of $W|_L\rightarrow L$, and has as its differential at any point of $L$ the identity. Let $f:=\chi \circ\phi$. Then $f$ maps $L$ to the zero section of $\Lambda^k(T^*L)$, and is a diffeomorphism onto an open subset $U'\subset \Lambda^k(T^*L)$ which contains $L$ (as the zero section). Furthermore we have for $x\in L$, $T_xf|_{W_x}=\chi$ and $T_xf|_{T_xL}=id_{T_xL}$ (where we have identified $T_x(W_x)$ with $W_x$). 
Thus we obtain for $x\in L$, $T_xf=\Psi_x$, and upon putting
$ \widetilde{\omega}:=f^*\omega^{\Lambda^k(T^*L)}$,we arrive, by the above said, at $\widetilde{\omega}|_L=\omega|_L$. 

We now want to show that for any vector fields $X,Y$ (defined in $U$) and tangent to $W$, $\iota_{X\wedge Y}\widetilde{\omega}=0$. Let $p\in U$ and $F_p$ be the leaf of the foliation defined by $W,$ which passes through $p$. This leaf also passes through a point of $L$, say $x$. Then $\phi$ maps this leaf to the space $W_x$, and thus $f$ maps $F_p$ to $\Lambda^k(T_x^*L)$. Thus if, $X_p,Y_p$ are vectors in $W_p$, we may consider that $T_pf(X_p),T_pf(Y_p)$ are vectors of $\Lambda^k(T_x^*L)$. Since this space is $1$-Lagrangian with respect to $\omega^{\Lambda^k(T^*L)}$, we find $\iota_{X\wedge Y}\widetilde{\omega}=0$.

Working on an open neighborhood $U$ of $L$ in $M$, we adapt now the well-known Moser path method (see \cite{mosvol}) to our situation. Let $\omega_t=\omega +t(\widetilde{\omega}-\omega)$ for $t\in [0,1]$. Then we have $\omega_t|_L=\omega|_L$ and $\frac{\partial}{\partial t}\omega_t=\widetilde{\omega}-\omega=:\omega'$ so $\omega'|_L=0$. Thus for $x\in L$, $\omega_t(x)$ is nondegenerate for all $t\in [0,1]$ and the set of points $(t,x)$ such that $\omega_t(x)$ is nondegenerate is an open subset of $\mathbb{R}\times M$. So, shrinking $U$ if necessary, we may suppose that $\omega_t$ is nondegenerate in $U$ for all $t\in [0,1]$. We also have that $d\omega_t=d\omega'=0$. By the relative Poincar\'e lemma, there exist a neighborhood $U$ of $L$ in $M$ and $\mu\in\Omega^k(U)$ with $d\mu=\omega'$ and $\mu|_L=0$. Moreover 
-upon using Theorem 3- we can choose $\mu$ such that $\iota_v\mu=0$ whenever $v\in W$, because $\omega'$ vanishes when contracted with two vectors of $W$ (because both $\omega$ and $\widetilde{\omega}$ have this property). Therefore $\mu$ may be interpreted as a section $U\rightarrow \Lambda^k(TM/W)^*$. Let us now take a look at the map :
\begin{equation*}
\omega_t^{\sharp}:W\rightarrow \Lambda^k(T^*M),
\end{equation*}
given by contraction of $\omega_t$ with vectors of $W$. For $u,v\in W$, lying over the same point, $\iota_{u\wedge v}\omega_t=0$ ; so $\omega_t^{\sharp}$ may be seen as a map :
\begin{equation*}
\omega_t^{\sharp}:W\rightarrow \Lambda^k(TM/W)^* \ ; \ u\mapsto \iota_u\omega_t,
\end{equation*}
and, by the nondegeneration of $\omega_t$, this map is injective, and thus is an isomorphism for dimensional reasons. Then for each $t\in [0,1]$, there exists a unique vector field $X_t$ (which take values in $W$) such that :
\begin{equation*}
\iota_{X_t}\omega_t +\mu=0
\end{equation*}
The association $(t,x)\rightarrow X_t(x)$ thus gives a (time-dependent) vector field tangent to $W$. But $\mu|_L=0$ so we deduce that for $x\in L$, for all $t\in [0,1]$, $X_t(x)=0$ by the nondegeneration of $\omega_t$. Let $\phi_t$ be the curve of local diffeomorphisms tangent to $X_t$. We have $\phi_t|_L=id|_L$.  So $\forall t\in [0,1]$ $\phi_t|_L$ is defined. But if $D(\phi)\subset [0,1]\times U$ is the domain of $\phi$, then $[0,1]\times L\subset D(\phi)$ so, by the openness of $D(\phi)$, we may suppose (again shrinking the domain $U$ if necessary), that $\phi_t$ is defined in $U$ for all $t\in[0,1]$ . Now we compute :
\begin{align*}
\frac{\partial}{\partial t} \left(\phi_t^*\omega_t\right) &=
\phi_t^*\left(\frac{\partial}{\partial t}\omega_t +\mathcal{L}_{X_t}\omega_t\right) \\
&=\phi_t^*(\omega'-d\mu)=0.
\end{align*}
Let $g:=\phi_1$. Then $g^*\widetilde{\omega}=\omega$, so if $\phi=g\circ f$ (where $f$ is defined above), we obtain $\phi^*(\omega^{\Lambda^k(T^*L)})=\omega$, and maintain $\phi|_L=id_L$, concluding the proof. 
\end{proof}


\appendix
\setcounter{secnumdepth}{0}
\section{Appendix: Two results in foliated differential topology}

In this appendix we give proofs for two "folkloristic" but subtle (and useful) extensions of well-known 
results in differential topology. Both are used in \cite{Martin1988} but ask for
a detailed proof. A brief sketch of a proof of the first result is given on the pages 88-89 in \cite{conl}. \\

We begin with the tubular neighborhood theorem, in the presence of a foliation:

\begin{thm}[Foliated tubular neighborhood theorem]
Let $M$ be a manifold, $W\subset TM$ an integrable distribution, and $N$ a submanifold complementary to $W$ in the sense that
 $W|_N\oplus TN = TM|_N$. Then there exist an open neighborhood $U$ of $N$ in $M$, and a diffeomorphism $\phi$ from $U$ 
 onto an open subset of $W|_N$ containing the zero section, such that $\phi|_N=id_N$, the differential of $\phi$ at any point of $N$ 
 is the identity, and $\phi$ maps for all $p$ in $N$ the leaf of the foliation defined by $W$ passing through it to the fiber 
 $\phi(U)\cap (W|_p)$ of $W\vert_N \to N$, intersected with $\phi(U)$. 
\end{thm}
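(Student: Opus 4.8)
The plan is to realise $\phi$ as the inverse of a \emph{leafwise exponential map}. Fix once and for all an auxiliary Riemannian metric $g$ on $M$. Since $W$ is integrable it is the tangent distribution of a foliation $\mathcal F$; for $p\in N$ let $F_p$ denote the leaf through $p$, an immersed submanifold carrying the induced metric $g|_{F_p}$. Writing $r=\operatorname{rank}W$ and $s=\dim N$, complementarity gives $r+s=\dim M$, and since $W_p\cap T_pN=0$ it also forces $F_p$ and $N$ to meet cleanly at $p$, so that $p$ is isolated in $F_p\cap N$. Define
\[
E(v):=\exp^{F_p}_{p}(v)\qquad\text{for }v\in W_p,\ p\in N,
\]
wherever the leafwise geodesic is defined; since the domain of the leafwise geodesic flow is open and contains the zero vectors, this includes a neighbourhood $\Omega_0$ of the zero section of $W|_N\to N$.

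First I would check that $E$ is smooth on $\Omega_0$. This is verified in a Frobenius chart adapted simultaneously to $\mathcal F$ and to $N$: the Frobenius theorem gives coordinates $(x,y)\in\mathbb R^r\times\mathbb R^s$ in which the plaques of $\mathcal F$ are the slices $\{y=\mathrm{const}\}$; as $N$ is complementary to $W$ it is locally the graph of a map $y\mapsto h(y)$, and the shear $x\mapsto x-h(y)$ yields coordinates in which moreover $N=\{x=0\}$. In such coordinates $g|_{\{y=c\}}$ is a smooth family of Riemannian metrics on an open subset of $\mathbb R^r$ depending smoothly on the parameter $c$, so the leafwise geodesic equation is an ODE with smooth dependence on $c$ as well as on the initial data; hence $(c,v)\mapsto\exp^{\{y=c\}}_{(0,c)}(v)$ is smooth, which is exactly $E$ read in these coordinates. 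As the definition of $E$ is chart-independent, patching gives smoothness on all of $\Omega_0$.

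Next, along the zero section the differential of $E$ is the identity: restricted to a fibre $W_p$ it is the leafwise exponential at $p$, whose differential at $0$ is $\mathrm{id}_{W_p}$, while restricted to the zero section it is the inclusion $N\hookrightarrow M$; under the canonical splittings $T_{0_p}(W|_N)=W_p\oplus T_pN=T_pM$ these assemble to the identity. Hence $E$ is a local diffeomorphism at each point of the zero section, and a standard argument (using that $E$ restricted to the zero section is injective and that $E$ is a local diffeomorphism along it) produces a neighbourhood $\Omega\subset\Omega_0$ of the zero section such that $E|_\Omega$ is a diffeomorphism onto an open set $U\supset N$. Setting $\phi:=(E|_\Omega)^{-1}\colon U\to\Omega\subset W|_N$, the equalities $\phi|_N=\mathrm{id}_N$ and $T_p\phi=\mathrm{id}$ for $p\in N$ follow from the corresponding properties of $E$, and $E(W_p\cap\Omega)\subset F_p$ by construction, so $\phi$ sends plaques of $\mathcal F$ into fibres of $W|_N\to N$.

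The remaining, and most delicate, point is the precise leaf-to-fibre assertion: one must show $E(W_p\cap\Omega)=F_p\cap U$, not merely $\subseteq$, i.e. that no leaf meeting $U$ meets $N$ a second time inside $U$. The inclusion $\subseteq$ is immediate; for $\supseteq$ one shrinks $U$ (equivalently $\Omega$), using the local uniqueness of the intersection $F_p\cap N$ near $p$ together with a plaque-connectedness argument for a cover of $N$ by adapted Frobenius charts. This is exactly the ``folkloristic but subtle'' step flagged in the introduction, and it is the one I expect to cost the most care; once it is in place, $\phi$ has all the properties asserted in the theorem.
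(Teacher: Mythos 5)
Your construction is the paper's, step for step: an auxiliary metric, the leafwise exponential map along $N$, smoothness verified in foliation charts sheared so that $N$ becomes a coordinate slice (using smooth dependence of the leafwise geodesic ODE on the transverse parameter), the identity differential along the zero section, and the standard ``local diffeomorphism along a submanifold'' lemma (the paper's Proposition 3) to invert $E$ near the zero section. Up to that point the proposal is correct and complete, and it matches the paper's proof in every detail.

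The one place you diverge is the final ``delicate point'', and there you are aiming at a statement that is both stronger than what is needed and, in the global form you give it, false: the equality $E(W_p\cap\Omega)=F_p\cap U$ for the \emph{global} leaf $F_p$ cannot in general be achieved by shrinking $U$. Take $M=T^2$ foliated by lines of irrational slope and $N$ a transverse circle: every leaf is dense, $F_p\cap N$ is a dense orbit in $N$ (so your claim that $p$ is isolated in $F_p\cap N$ already fails in the ambient topology), and every leaf meets any neighbourhood of $N$ in infinitely many plaques through distinct points of $N$, which $\phi$ necessarily scatters over infinitely many fibres. The conclusion of the theorem must therefore be read plaque-wise --- $\phi$ carries the connected component of $F_p\cap U$ through $p$ (in the leaf topology) onto $W_p\cap\phi(U)$ --- and in that reading your construction already proves everything: the forward inclusion is $E(W_p\cap\Omega)\subset F_p$, which holds by construction, and the reverse inclusion is automatic, since every $v\in W_p\cap\phi(U)=W_p\cap\Omega$ satisfies $E(v)\in F_p\cap U$, hence $v=\phi(E(v))\in\phi(F_p\cap U)$; if you arrange the fibres $W_p\cap\Omega$ to be connected (say star-shaped), these images are exactly the connected components of $F_p\cap U$. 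This plaque-wise statement is also all that the application in Theorem 1 uses, namely that $T_q\phi$ maps $W_q$ onto the tangent space of a single fibre for each $q\in U$. So there is no remaining gap; the paper itself simply asserts the final properties without comment.
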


\begin{proof}
Let $g$ be a fixed (auxiliary) Riemannian metric on the manifold $M$.\newline

\noindent Given $q\in M$, the leaf $\mathcal{W}_q$ of the foliation $\mathcal{W}$ defined by the distribution $W$ and containing $q$ is given 
as an injectively immersed submanifold $j_q:F_q\rightarrow M$ (with image $j_q(F_q)=\mathcal{W}_q$). The induced Riemannian 
metric $j_q^{*}(g)$ defines an exponential map $exp^{\mathcal{W}}$, notably one has $exp^{\mathcal{W}}_q:T_q(F_q)\rightarrow F_q$, 
defined on an open neighborhood of 0. Since $T_q(F_q)$ is canonically identified with $W_q=T_q(\mathcal{W}_q)$ via the differential of $j_q$, 
and $j_q$ is smooth, $exp^{\mathcal{W}}_q$ is a smooth map from an open neighborhood of 0 in $W_q$ to $M$, having values in 
$\mathcal{W}_q$. Restricting $q$ to be an element of $N$ we obtain a map $exp^{\mathcal{W},N}$ from a subset of $W|_N$ containing 
$N$ to $M$. 

Let us now show that $exp^{\mathcal{W},N}$ is, in fact, smooth on an open neighborhood of $N$ in $W|_N$. Fix $q$ in $ N$ and a 
coordinate chart $M\supset U\xrightarrow{\varphi} V_1\times V_2\subset \mathbb{R}^{m-d}\times\mathbb{R}^d$, such that the fibers 
of $\pi:V_1\times V_2\rightarrow V_1$ are the leaves of the foliation $\mathcal{W}$ ($d$ is the rank of this foliation). Furthermore, 
we can assume that $\varphi(q)=0$ and denote the elements of $\mathbb{R}^{m-d}$ resp. $\mathbb{R}^d$ by $x$ resp. $z$. 

We denote $\varphi(U\cap N)$ by $N$ and $T\varphi(W)$ by $W$ if no ambiguities are possible. By the assumption 
$TM|_N=TN\oplus W|_N$ we have $\forall q \in N \subset V_1\times V_2$ that $\mathbb{R}^m=T_q(V_1\times V_2)=
T_qN\oplus W_q=T_qN\oplus \mathbb{R}^d$ and thus the natural projection $\pi_q: T_qN\rightarrow \mathbb{R}^{m-d}$ 
is a linear isomorphism. Thus $\pi\vert_N: N\rightarrow V_1$ has everywhere maximal rank equal to the dimension of $V_1$. 
Shrinking $V_1$ and $V_2$ if necessary, we can assume that $\pi|_N:N\rightarrow V_1$ is a diffeomorphism whose inverse is 
described by $(id_{V_1},f):V_1\rightarrow V_1\times V_2$, where $f:V_1\rightarrow V_2$ is smooth and $N=\Gamma_f$, the 
graph of $f$. The map $\psi$ given by $\psi(x,z)=(x,z-f(x))=:(x,y)$ is a diffeomorphism of $V_1\times V_2$ to an open subset of 
$\mathbb{R}^m$. Restricting $\psi$ to an appropriate open neighborhood of 0, the image of $\psi$ is a product of open subsets 
of $\mathbb{R}^{m-d}$ and $\mathbb{R}^d$. Furthermore, $\psi(0)=0$, $\psi$ preserves the leaves of $\mathcal{W}$, and maps 
$N=\Gamma_f$ to $\{y=0\}$.

Post-composing $\varphi$ with $\psi$ yields a chart of $M$ near $q$ compatible with the foliation $\mathcal{W}$ and "adapted" to $N$. 
Obviously, we can construct a locally finite covering of $N$ by open subsets of $M$ that are domains of such charts, again 
denoted by $\varphi:U\rightarrow V_1\times V_2$ for simplicity.

In these coordinates  $exp^{\mathcal{W},N}_{(x,0)}$ is given as the time-one value of the solution of the following differential equation :

\begin{equation*}
\frac{d^2 y^k}{dt^2} + \sum_{i,j}\Gamma_{i,j}^k(x,y)\frac{d y^i}{dt} \frac{d y^j}{dt}=0   \,\,\,\,    \hbox{for}\, \,\, \,  1\leq k \leq d, 
\end{equation*}

\noindent subject to the initial condition that $x\in V_1$, $y(0)=0$ and $\frac{dy}{dt}(0)\in W_{(x,0)}$. Standard results on smooth ordinary 
differential equations depending smoothly on parameters imply that there exists an open subset $\widetilde{O}\subset W|_N$ 
containing $N$, where $exp^{\mathcal{W},N}$ is uniquely defined and smooth. 

Upon identifying, for $q\in N$, $T_q\widetilde{O}=T_qN\oplus W_q=T_qM$, we obtain that $D(exp^{\mathcal{W},N}_q)=id_{T_qM}$. 
By the below cited Proposition 3, it follows that there exists an open neighborhood $O$ of $N$ in $\widetilde{O}\subset W|_N$ 
such that $exp^{\mathcal{W},N}|_O$ is a diffeomorphism onto its image $U$, an open neighborhood of $N$ in $M$. Calling its inverse $\phi$, 
this latter map fulfills the conditions stated in Theorem 2.

\end{proof}	

\noindent The last argument relies on a standard result in differential topology (cf., e.g., Proposition 7.3 in \cite{golubguil}):

\begin{prop}
Let $Y$ and $Y'$ be two manifolds, and $X\subset Y$, $X'\subset Y'$ two regular submanifolds. Let $f:Y\rightarrow Y'$ be a smooth map satisfying :
\begin{itemize}
\item $f|_X:X\rightarrow X'$ is a diffeomorphism
\item $T_xf:T_xY\rightarrow T_{f(x)}Y'$ is an isomorphism for all $x\in X$
\end{itemize}
Then there exists an open neighborhood $V$ of $X$ in $Y$ such that $f(V)$ is open in $Y'$, and $f|_V$ is a diffeomorphism.
\end{prop}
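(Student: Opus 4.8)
The plan is to deduce the statement from the inverse function theorem in two moves: first produce an open neighbourhood of $X$ on which $f$ is a local diffeomorphism, and then shrink it so that $f$ becomes injective there.

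For the first move, apply the inverse function theorem at each point $x\in X$: since $T_xf$ is an isomorphism, $x$ has an open neighbourhood $U_x$ in $Y$ with $f|_{U_x}$ a diffeomorphism onto an open subset of $Y'$. Put $V_0:=\bigcup_{x\in X}U_x$. Then $V_0$ is an open neighbourhood of $X$, and on $V_0$ the map $f$ is open, locally injective, and has smooth local inverses. Consequently it suffices to find an open set $V$ with $X\subseteq V\subseteq V_0$ on which $f$ is injective, for then $f|_V$ is automatically a diffeomorphism onto the open set $f(V)$.

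For the second move, work in the product $V_0\times V_0$ with the closed set $B:=(f\times f)^{-1}(\Delta_{Y'})=\{(a,b):f(a)=f(b)\}$ and $C:=B\setminus\Delta_{V_0}$. Local injectivity of $f$ on $V_0$ shows that no point of the diagonal $\Delta_{V_0}$ is an accumulation point of $C$, whence $C$ is closed in $V_0\times V_0$; and if a sequence in $C$ converged to a point $(x,x')\in X\times X$, then $f(x)=f(x')$, forcing $x=x'$ by injectivity of $f|_X$, contradicting the previous statement. Hence $C$ is a closed subset of $V_0\times V_0$ disjoint from $X\times X$, so $\mathcal{U}:=(V_0\times V_0)\setminus C$ is an open neighbourhood of $X\times X$, and the whole problem reduces to finding an open $V$ with $X\subseteq V\subseteq V_0$ and $V\times V\subseteq\mathcal{U}$ (which says exactly that $f|_V$ is injective).

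The main obstacle is this last reduction: for each $x$ one gets an open box $V_x\times V_x\subseteq\mathcal{U}$, but the union $V:=\bigcup_xV_x$ only controls the terms $V_x\times V_x$, not the cross terms $V_x\times V_{x'}$ with $x\ne x'$, so it need not be globally injective. I would close this gap by a standard shrinking argument. Fix an auxiliary metric $d$ on $Y$ and, for $p\in V_0$, let $r(p)$ be the supremum of those $\delta>0$ for which the metric ball $B_\delta(p)$ lies in $V_0$ and $f|_{B_\delta(p)}$ is injective; then $r$ is lower semicontinuous and strictly positive, and the restriction of $(f|_X)^{-1}$ is uniformly continuous on compacta. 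Using a compact exhaustion of the (second countable) manifold $X$, one patches these local data into a continuous positive function $\varepsilon$ on $X$ chosen small enough, relative to $r$ and to the moduli of continuity of $f$ and of $(f|_X)^{-1}$, so that $V:=\{\,y\in V_0:\exists\,x\in X \text{ with } d(y,x)<\varepsilon(x)\,\}$ admits no pair of distinct points with the same image. This step --- and nowhere else --- genuinely uses both the hypothesis that $f|_X$ be a diffeomorphism (not merely that $f$ be a local diffeomorphism along $X$) and the local compactness of manifolds; the rest is soft.
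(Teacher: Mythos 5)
First, a point of comparison: the paper does not prove this proposition at all --- it is quoted as a standard fact with a reference to Golubitsky--Guillemin --- so there is no in-text argument to measure yours against. On its own terms, your strategy is the standard and correct one: the inverse function theorem reduces everything to producing a neighborhood of $X$ on which the local diffeomorphism $f$ is globally injective, and your intermediate discussion correctly identifies why the naive union $\bigcup_x V_x$ fails (the cross terms) and why no purely topological argument can work: an open neighborhood of $X\times X$ in $Y\times Y$ need not contain any set of the form $V\times V$ when $X$ is noncompact (e.g.\ $\{(y_1x_2)^2<1,\ (y_2x_1)^2<1\}$ around $(\mathbb{R}\times\{0\})^2$ in $\mathbb{R}^2\times\mathbb{R}^2$). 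So the set-up and the diagnosis are both right.

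The one place where your write-up stops short of a proof is exactly the step you flag as the crux: ``chosen small enough \ldots\ so that $V$ admits no pair of distinct points with the same image'' is asserted, not verified, and this is where an error could hide. It can be completed, and your ingredients suffice, but the verification needs a specific dichotomy. Let $r(p)$ be your injectivity radius (it is $1$-Lipschitz, hence continuous and locally bounded below), set $\rho(x)=r(x)/4$, and let $\eta(x)>0$ be the $d'$-distance from $f(x)$ to $Y'\setminus O_x$, where $O_x\subset Y'$ is open with $O_x\cap X'=f(B_{\rho(x)}(x)\cap X)$; such an $O_x$ exists precisely because $X'$ is a \emph{regular} (embedded) submanifold and $f|_X$ is a homeomorphism onto it --- this, rather than uniform continuity on compacta, is the clean way to use the hypothesis on $f|_X$. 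Choose $\varepsilon(x)<r(x)/8$ with $f(B_{\varepsilon(x)}(x))\subset B_{\eta(x)/2}(f(x))$. Given $a\in B_{\varepsilon(x)}(x)$, $b\in B_{\varepsilon(x')}(x')$ with $f(a)=f(b)$: if $d(x,x')<\rho(x)$ or $d(x,x')<\rho(x')$, both points lie in a single ball of radius $<r(x)$ (resp.\ $<r(x')$), so $a=b$; if $d(x,x')\geq\rho(x)$ \emph{and} $d(x,x')\geq\rho(x')$, then $d'(f(x),f(x'))\geq\max(\eta(x),\eta(x'))\geq\tfrac12\eta(x)+\tfrac12\eta(x')$, so the balls $B_{\eta(x)/2}(f(x))$ and $B_{\eta(x')/2}(f(x'))$ are disjoint and $f(a)\neq f(b)$. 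Note the symmetrization in the case split: handling ``$d(x,x')\geq\rho(x)$'' alone does not work, since $\eta(x')$ may exceed $\eta(x)$. Note also that no continuity of $\varepsilon$, compact exhaustion, or patching is needed: a pointwise choice suffices, because $V=\bigcup_{x\in X}B_{\varepsilon(x)}(x)$ is open as a union of open balls. With these details supplied your proof is complete and correct.
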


Now we show the relative Poincar\'e lemma, again in the presence of a foliation:

\begin{thm} [Foliated relative Poincar\'e lemma]
Let $M$ be a smooth manifold and $N{\subset} M$ a submanifold. Let $\omega$ be a closed $(k{+}1)$-form on $M$ which vanishes when 
pulled back to $N$. Then there exists a neighborhood $U$ of $N$ in $M$, and a $k$-form $\mu$ defined on $U$, such that 
$d\mu=\omega|_U$ and $\mu|_N =0$. Moreover, if there exists an integrable distribution $W\subset TM$ complementary to $N$, and 
such that $\iota_{u\wedge v}\omega=0$ whenever $x$ is in $M$ and $u$ and $v$ are in $W_x \subset T_xM$, we may choose 
$\mu$ such that $\iota_{X}\mu=0$, for all vector fields $X$ taking value in $W$ and defined on an open subset of $U$.
\end{thm}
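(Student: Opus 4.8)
The plan is to prove the foliated relative Poincar\'e lemma in two stages, mirroring the classical proof via a deformation retraction onto $N$ but carried out in coordinates adapted to the foliation, so as to control the contraction of $\mu$ with vectors in $W$. First I would reduce to a local statement: by a partition of unity subordinate to a locally finite cover of $N$ by ``adapted'' charts $\varphi:U_\alpha\to V_1\times V_2\subset\mathbb R^{m-d}\times\mathbb R^d$ of the kind constructed in the proof of Theorem 2 (leaves of $\mathcal W$ are the fibers of $\pi:V_1\times V_2\to V_1$, and $N$ is sent to $\{y=0\}$ with $y$ the leaf-coordinate), it suffices to build $\mu_\alpha$ on each chart with $d\mu_\alpha=\omega$, $\mu_\alpha|_N=0$, and $\iota_X\mu_\alpha=0$ for $X$ tangent to $W$, and then patch. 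The local-to-global patching is routine once the local forms have the three properties, since the properties are closed under multiplication by functions pulled back from $V_1$; I would be slightly careful that the partition-of-unity functions can be chosen constant along the leaves, or else simply check that $\iota_X(\sum_\alpha \rho_\alpha\mu_\alpha)=0$ is preserved regardless, using $\iota_X\mu_\alpha=0$ and $d\rho_\alpha\wedge\mu_\alpha$ — wait, this needs care, so the cleaner route is to choose the $\rho_\alpha$ to depend only on the $x$-variables in a common adapted structure, which is possible because $\pi:N\to V_1$ is a diffeomorphism locally.

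In the local model I would use the standard homotopy operator for the retraction $r_s(x,y)=(x,sy)$, $s\in[0,1]$, onto $\{y=0\}$, giving
\begin{equation*}
\mu:=\int_0^1 r_s^*\bigl(\iota_{Y}\omega\bigr)\,\frac{ds}{s},
\end{equation*}
where $Y$ is the vector field generating $r_s$, namely the radial field $Y=\sum_j y^j\,\partial/\partial y^j$ in the $y$-directions (the Euler field of the fibers). The classical computation gives $d\mu=\omega - r_0^*\omega = \omega$ on the chart, using that $r_1=\mathrm{id}$, $r_0$ has image in $N$, and $\omega$ pulls back to zero on $N$; and $\mu|_N=0$ is immediate because $r_s$ collapses $N$ to itself with $Y|_N=0$. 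The essential new point is that $Y$ takes values in $W$ (it is tangent to the leaves $\{x=\mathrm{const}\}$), and $r_s$ preserves $W$; therefore for any $X$ tangent to $W$ we get $\iota_X\mu = \int_0^1 r_s^*(\iota_{(r_s)_*^{-1}X}\iota_Y\omega)\,ds/s$, and the integrand involves $\iota_{u\wedge v}\omega$ with both $u=Y$ and $v=(r_s)_*^{-1}X$ in $W$, hence vanishes by hypothesis. So $\iota_X\mu=0$.

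The main obstacle I anticipate is \emph{not} the local formula, which is essentially bookkeeping, but two gluing/technical issues: (i) ensuring the adapted charts can be chosen with a \emph{compatible} product structure near $N$ so that ``coordinate depends only on $x$'' makes invariant sense and the partition of unity can be taken leaf-independent — this is really a matter of invoking the construction in Theorem 2 and noting the transition maps between adapted charts preserve the splitting into $x$- and $y$-directions along $N$ (though not globally away from $N$, which is why one must be a little delicate); and (ii) making sure the homotopy operator integral converges and is smooth up to $s=0$, which is the usual point that $\iota_Y\omega$ vanishes to first order along $\{y=0\}$ so that $r_s^*(\iota_Y\omega)/s$ extends smoothly to $s=0$ — standard, but worth stating. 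A secondary subtlety is that ``$X$ takes values in $W$ and is defined on an open subset of $U$'' must be handled pointwise: it is enough to verify $\iota_X\mu|_p=0$ for each $p$ and each $X_p\in W_p$, which the above computation delivers directly without needing $X$ globally defined or tangent to anything. Once these points are in place, the three asserted properties of $\mu$ follow and the theorem is proved.
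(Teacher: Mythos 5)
Your local formula is the right one and is exactly the mechanism the paper uses, but your overall strategy has a genuine gap at the patching step, and you half-noticed it yourself. If $d\mu_\alpha=\omega$ on each chart $U_\alpha$ and you set $\mu=\sum_\alpha\rho_\alpha\mu_\alpha$, then
\begin{equation*}
d\mu=\sum_\alpha\rho_\alpha\,d\mu_\alpha+\sum_\alpha d\rho_\alpha\wedge\mu_\alpha=\omega+\sum_\alpha d\rho_\alpha\wedge\mu_\alpha,
\end{equation*}
and the error term $\sum_\alpha d\rho_\alpha\wedge\mu_\alpha$ does not vanish: the identity $\sum_\alpha d\rho_\alpha=0$ is useless here because the $\mu_\alpha$ differ from chart to chart (two local primitives of $\omega$ differ by a closed form, not by zero). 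Choosing the $\rho_\alpha$ to depend only on the $x$-variables preserves the conditions $\mu|_N=0$ and $\iota_X\mu=0$, but does nothing to kill $d\rho_\alpha\wedge\mu_\alpha$. Primitives of closed forms simply cannot be glued by a partition of unity; this is the whole reason the relative Poincar\'e lemma is proved with a single global homotopy operator rather than locally.

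The fix is to not localize at all, and this is what the paper does: it first invokes the foliated tubular neighborhood theorem (Theorem 2) to identify one neighborhood $U$ of $N$ with an open subset of the bundle $W|_N\to N$ in such a way that the fibers are contained in the leaves of $W$. On this single global model the fiberwise scaling $H_t(x)=tx$ is defined everywhere, $H_1=\mathrm{id}$, $H_0$ retracts onto $N$, and the homotopy formula gives $\mu=\int_0^1 H_t^*\iota_{Y_t}\omega\,dt$ globally on $U$, with $d\mu=\omega$, $\mu|_N=0$, and $\iota_X\mu=0$ because $Y_t(x)$ and $T_xH_t(X_x)$ both lie in $W_{tx}$ (here $H_t$ preserves each leaf since the fibers are leaves). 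Your chart-level computation is precisely this argument restricted to a chart --- including the correct observations that the integrand extends smoothly to $s=0$ and that the contraction kills two $W$-vectors (modulo a small slip: it is $(r_s)_*X$, not $(r_s)_*^{-1}X$, that appears when you pull $\iota_X$ through $r_s^*$) --- so the repair is to run it once on the global foliated tubular neighborhood instead of chart by chart.
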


\begin{proof}
By the (standard) tubular neighborhood theorem, there exist $U$ and $V$ neighborhoods of $N$ in $M$ respectively $E$ (where $E\rightarrow N$ 
can be chosen to be any vector bundle such that $E\oplus TN=TM|_N$), and a diffeomorphism $\phi: U\rightarrow V$ fixing 
$N$ pointwise. Thus in what follows, 
we can and will assume to be in a open neighborhood $U$ of $N$ in $M$, which is also a vector bundle $\pi: E=U\to N$ over $N$. Let us consider 
the map:
\begin{equation*}
H:[0,1]\times U\rightarrow U\, ,  \, (t,x)\mapsto t\cdot x =tx.
\end{equation*}
If we denote $H_t(x):=H(t,x)$ then $H_0=\iota\circ \pi$ (where 
$\iota:N\rightarrow U=E$ is the inclusion of $N$ as the zero-section of $E$), and $H_1=id_E=id_U$. Let 
$Y_t(x):=\frac{d}{dt} H_t(x)$. The smooth map $Y_t$ is not a vector field since 
$H$ is not a flow, but the following formula still holds:
\begin{equation*}\tag{*}
\frac{d}{dt}(H_t^*\omega)=d(H_t^*\iota_{Y_t}\omega)+H_t^*\iota_{Y_t}d\omega,
\end{equation*} 
where, for $\alpha$ a $(k{+}1)$-form, $H_t^*\iota_{Y_t}\alpha$ is the following (well-defined!) $k$-form:
\begin{equation*}
(H_t^*\iota_{Y_t}\alpha)_x(v_1,...,v_k)=\alpha_{tx}(Y_t(x),T_xH_t(v_1),...,T_xH_t(v_k)),
\end{equation*}
for $x\in U$ and $v_j \in T_xU$. For a proof of $(*)$ see \cite{guilstern}. 

Since $\iota^*\omega=0$ and $\omega$ is closed we obtain:
\begin{align*}
\omega|_U&=H_1^*\omega-H_0^*\omega \\
&=\int_{[0,1]}\left(\frac{d}{dt}H_t^*\omega\right)dt \\
&=\int_{[0,1]}(d(H_t^*\iota_{Y_t}\omega))dt \\
&=d\int_{[0,1]}(H_t^*\iota_{Y_t}\omega)dt \\
&=d\mu,
\end{align*}
where we set $\mu:=\int_{[0,1]}(H_t^*\iota_{Y_t}\omega)dt$. Moreover $\mu|_N=0$ because $Y_t|_N=0$. 

To prove the last part of the theorem, we apply Theorem 2 in order to choose a foliated tubular 
neighborhood $U$ of $N$ with respect to $W$.
We can thus assume that the fibers of $U\rightarrow N$ are the fibers of $W|_N\rightarrow N$. Then for 
$x\in U$, $Y_t(x)\in W_{tx}$, implying for $X$ a vector field tangent to $W$:
\begin{align*}
(\iota_XH_t^*\iota_{Y_t}\omega)_x(v_1,...,v_{k{-}1})&=(H_t^*\iota_{Y_t}\omega)_x(X(x),v_1,...,v_{k{-}1}) \\
&=\omega_{tx}(Y_t(x),T_xH_t(X(x)),T_xH_t(v_1),...,T_xH_t(v_{k-1})) \\
&=0
\end{align*}
since $Y_t(x)$ and $T_xH_t(X(x))$ are both in $W_{tx}$. 
\end{proof}


\bibliographystyle{plain}

\bibliography{GS-TW-Lagrangian-smflds}

\begin{thebibliography}{10}

\bibitem{arno}
Vladimir {Arnold}.
\newblock {\em {Mathematical methods of classical mechanics}}, volume~60 of
  {\em Graduate Texts in Mathematics}.
\newblock Springer, 1989.

\bibitem{conl}
Alberto {Candel} and Lawrence {Conlon}.
\newblock {\em {Foliations I}}, volume~23 of {\em Graduate Texts in
  Mathematics}.
\newblock American Mathematical Society, 2000.

\bibitem{cil1999}
Frans {Cantrijn}, Alberto {Ibort}, and Manuel {de L\'eon}.
\newblock On the geometry of multisymplectic manifolds.
\newblock {\em Journal of the Australian Mathematical Society. Series A. Pure
  Mathematics and Statistics}, 66(3):303–330, 1999.

\bibitem{leodieg}
Manuel {de L\'eon}, David~Mart\'\i{n} {de Diego}, and Aitor {Santamar\'\i
  a-Merino}.
\newblock {Tulczyjew's triples and lagrangian submanifolds in classical field
  theories}.
\newblock In: Applied Differential Geometry and Mechanics. Universiteit Gent,
  Ghent, Academia Press, 2003.

\bibitem{Forger:2012pr}
Michael Forger and Sandra~Z. Yepes.
\newblock {Lagrangian distributions and connections in multisymplectic and
  polysymplectic geometry}.
\newblock {\em Differ. Geom. Appl.}, 31:775--807, 2013.

\bibitem{golubguil}
Marty {Golubitsky} and Victor {Guillemin}.
\newblock {\em {Stable Mappings and Their Singularities}}.
\newblock Graduate Texts in Mathematics. Springer-Verlag New York, 1973.

\bibitem{guilstern}
Victor {Guillemin} and Shlomo {Sternberg}.
\newblock {\em Geometric asymptotics}.
\newblock American Mathematical Society, 1977.

\bibitem{Martin1988}
Geoffrey Martin.
\newblock A {D}arboux theorem for multi-symplectic manifolds.
\newblock {\em Letters in Mathematical Physics}, 16(2):133--138, 1988.

\bibitem{mosvol}
J\"urgen Moser.
\newblock On the volume elements on a manifold.
\newblock {\em Transactions of the American Mathematical Society},
  120:286--294, 1965.

\bibitem{leotil}
Leonid {Ryvkin} and Tilmann {Wurzbacher}.
\newblock {An invitation to multisymplectic geometry}.
\newblock {\em {\rm Available at} \tt arXiv:1804.02553}, 2018.

\bibitem{weinlag}
Alan Weinstein.
\newblock {Symplectic manifolds and their Lagrangian submanifolds}.
\newblock {\em Advances in Mathematics}, 6:329--346, 1971.

\end{thebibliography}

\end{document}